\numberwithin{equation}{section}
\newtheorem{thm}[equation]{Theorem} 
\newtheorem{prop}[equation]{Proposition}
\newtheorem{lemma}[equation]{Lemma} 
\newtheorem{cor}[equation]{Corollary}
\newtheorem{example}[equation]{Example}
\newtheorem{remark}[equation]{Remark}
\newtheorem{definition}[equation]{Definition}
\DeclareMathOperator{\Alt}{Alt}
\DeclareMathOperator{\Ext}{Ext}
\DeclareMathOperator{\gr}{gr} 
\DeclareMathOperator{\Ima}{Im}
\DeclareMathOperator{\sgn}{sgn}
\DeclareMathOperator{\Sym}{Sym}
\newcommand{\RR}{\mathbb R}
\newcommand{\DOT}{\setlength{\unitlength}{1pt}\begin{picture}(2.5,2)
               (1,1)\put(2,3.5){\circle*{3}}\end{picture}}
\newcommand{\Z}{{\mathbb Z}}
\newcommand{\id}{\mbox{\rm id\,}}      
\newcommand{\Hom}{\mbox{\rm Hom\,}}
\newcommand{\F}{\mathcal F}
\renewcommand{\ker}{\mbox{\rm Ker\,}}
\newcommand{\ot}{\otimes}
\newcommand{\cH}{\mathcal{H}}
\newcommand{\CC}{\mathbb{C}}  
\newcommand{\ZZ}{\mathbb{Z}}
\newcommand{\cF}{\mathcal{F}}
\newcommand{\cB}{\mathcal{B}}
\newcommand{\tensor}{\otimes}
\newcommand{\g}{{\mathfrak{g}}}
\DeclareMathOperator{\codim}{codim}
\newcommand{\HHD}{{\rm HH}^{\DOT}}
\newcommand{\HH}{{\rm HH}}
\newcommand{\Wedge}{\textstyle\bigwedge}
\newcommand{\HD}{H^{\hspace{.2ex}\DOT}}
\newcommand{\CD}{C^{\hspace{.3ex}\DOT}}
\begin{document}
\begin{abstract}
We define Drinfeld orbifold algebras as filtered algebras deforming
the skew group algebra (semi-direct product)
arising from the action of a finite group
on a polynomial ring.  They simultaneously generalize 
Weyl algebras, graded (or Drinfeld)
Hecke algebras, rational Cherednik algebras, symplectic reflection
algebras, and universal enveloping algebras of Lie algebras with group actions.
We give necessary and sufficient conditions on defining parameters
to obtain Drinfeld orbifold algebras in two general formats, both
algebraic and homological.  We
explain the connection between Hochschild cohomology and 
a Poincar\'e-Birkhoff-Witt property explicitly (using Gerstenhaber brackets).  
We also classify those
deformations of skew group algebras which arise
as Drinfeld orbifold algebras and give applications for abelian groups. 
\end{abstract}
\title[Drinfeld orbifold algebras]
{Drinfeld orbifold algebras}

\date{November 30, 2011}
\author{A.\ V.\ Shepler}
\address{Department of Mathematics, University of North Texas,
Denton, Texas 76203, USA}
\email{ashepler@unt.edu}
\author{S.\  Witherspoon}
\address{Department of Mathematics\\Texas A\&M University\\
College Station, Texas 77843, USA}\email{sjw@math.tamu.edu}
\thanks{The first author was partially supported by NSF grants
\#DMS-0800951 and \#DMS-1101177
and a research fellowship from the 
Alexander von Humboldt Foundation. 
The second author was partially supported by
NSF grants 
\#DMS-0800832 and \#DMS-1101399.}

\subjclass[2010]{16E40, 16S35, 16S80, 16W70, 20C08}
\keywords{Hochschild cohomology, deformations, skew group algebra,
   symplectic reflection algebra, graded Hecke algebra}

\maketitle

\section{Introduction}
Results in commutative algebra are often obtained by
an excursion through a larger, noncommutative universe.
Indeed, interesting noncommutative algebras often arise from deforming
the relations of a classical commutative algebra.
Noncommutative algebras modeled on groups acting on
commutative polynomial rings serve as useful tools
in representation theory and combinatorics, for example,
and include symplectic reflection algebras, rational Cherednik algebras,
and Lusztig's graded affine Hecke algebras.
These algebras are deformations of the skew group algebra generated
by a finite group and a polynomial ring (upon which the group acts).
They also provide an algebraic framework for understanding geometric
deformations of orbifolds.

Let $G$ be a finite group acting by linear transformations on a 
finite dimensional vector space $V$  over a field $k$.
Let $S:=S(V)$ be the symmetric algebra with the induced action
of $G$ by automorphisms, 
and let $S\# G$ be the corresponding skew group algebra.
A graded Hecke algebra (often called a Drinfeld Hecke algebra) 
emerges after deforming the relations of the symmetric algebra $S$
inside $S\# G$: We set each expression 
$x y-y x$ (for $x,y$ in $V$) in the tensor algebra $T(V)$ 
equal to an element of the group ring $kG$
and consider the quotient of $T(V)\# G$ by such relations.
The quotient is a deformation of $S\# G$ when the relations satisfy
certain conditions, and these conditions are explored in many papers
(see, e.g.,~\cite{Drinfeld,EtingofGinzburg,Lusztig,RamShepler}).

Symplectic reflection algebras are special cases of graded Hecke
algebras which generalize
Weyl algebras in the context of group actions on symplectic spaces.
In this paper,
we replace Weyl algebras with universal enveloping algebras of Lie
algebras and complete the analogy: {\em Weyl algebras are to symplectic 
reflection algebras as universal enveloping algebras are to what?}
Our answer is the class of Lie orbifold algebras, which together with
graded Hecke algebras belong to a larger class of Drinfeld
orbifold algebras as we define and explore in this article.

In a previous article~\cite{SheplerWitherspoon1}, we explained
that graded Hecke algebras are precisely those deformations 
of $S\# G$ which arise from Hochschild 2-cocycles
of degree zero with respect to a natural grading
on cohomology.  In fact, we showed that every such cocycle defines a graded 
Hecke algebra and thus lifts to a deformation of $S\# G$.
The present investigation 
is partly motivated by a desire to understand deformations
of $S\# G$ arising from Hochschild 2-cocycles of degree one.

Specifically, we assign degree 1 to each $v$ in $V$ and
degree $0$ to each $g$ in $G$ and consider the corresponding grading on 
$T(V)\# G$.  
We set each expression $x\otimes y-y\otimes x$ in the 
tensor algebra $T(V)$ equal to an 
element of degree at most 1 (i.e., nonhomogenous of filtered degree $1$)
and consider the quotient of $T(V)\# G$
by these relations as a filtered algebra.  
We call the resulting algebra a 
{\em Drinfeld orbifold algebra} if it 
satisfies the Poincar\'e-Birkhoff-Witt property,
i.e., if its associated graded algebra is isomorphic to $S\# G$.
Such algebras were studied by Halbout, Oudom, and Tang~\cite{HOT}
over the real numbers in the special case that
$G$ acts faithfully. We give a direct algebraic approach for arbitrary
group actions and fields here.

In this article, we explain in detail the connections between 
the Poincar\'e-Birkhoff-Witt property, deformation theory,
and Hochschild cohomology.  
We first classify those deformations of $S\# G$ which arise
as Drinfeld orbifold algebras.  
We then derive necessary and sufficient conditions on algebra parameters
that should facilitate efforts to study and classify these algebras. 
In particular, we express the PBW property as a set of conditions
using the Diamond Lemma.  
(Although our conditions hold over arbitrary characteristic, 
we include a comparison with   
the theory of Koszul rings over $kG$ used in~\cite{EtingofGinzburg}
and~\cite{HOT}, which requires $kG$ to be semisimple.)
We give an explicit road map from cohomology, expressed in terms
of Koszul resolutions, to the defining relations for Drinfeld orbifold algebras.
In particular, we explain how PBW conditions enjoy an elegant description
in terms of Gerstenhaber brackets.
 
Note that one can not automatically deduce results 
for Drinfeld orbifold algebras 
defined over $\CC$ from the results in~\cite{HOT} for similar
algebras defined over $\RR$.
(For example, the infinitesimal 
of a nontrivial deformation over $\CC$ associated to a
group $G$ acting on a complex vector space
is always supported off the set ${\mathcal R}$ of complex reflections in $G$,
yet the infinitesimal of a nontrivial deformation over $\RR$ associated
to that same group (acting on a real vector space of
twice the dimension) may have support including ${\mathcal R}$.)

More precisely, let us consider a linear ``parameter'' function
mapping the exterior product $V\wedge V$ to that part of $T(V)\# G$
having degree at most 1:
 $$\kappa: V\wedge V \rightarrow (k \oplus V)\otimes kG\ .$$
We drop the tensor sign when expressing elements
of $T(V)$ and $T(V)\# G$
(as is customary when working with noncommutative, 
associative algebras), writing $vw$ in place of  $v\tensor w$,
for example.  We also usually write $\kappa(v,w)$ for
$\kappa(v\wedge w)$, to make some complicated expressions clearer.
Define an algebra $\cH:=\cH_{\kappa}$ as the quotient
$$\cH_{\kappa} := T(V) \# G   /    
(   
v  w - w  v  - \kappa(v, w)\, \mid v,w \in V).
$$
We say that $\cH_{\kappa}$ satisfies the {\bf PBW condition} when 
its associated graded algebra $\gr \cH_{\kappa}$ 
is isomorphic to $S\# G$ (in analogy with the 
Poincar\'e-Birkhoff-Witt Theorem for
universal enveloping algebras).  In this case, 
we call $\cH_{\kappa}$ a {\bf Drinfeld orbifold algebra}.
One may check that the PBW condition is equivalent
to the existence of a 
basis $\{v_1^{m_1}\cdots v_n^{m_n} g:m_i\in \Z_{\geq 0}, g\in G\}$
for $\cH_{\kappa}$ as a $k$-vector space,
where $v_1,\ldots, v_n$ is a $k$-basis of $V$.

The terminology arises because Drinfeld~\cite{Drinfeld} 
first considered deforming
the algebra of coordinate functions $S^G$ of the
orbifold $V^*/G$ (over $\CC$)
in this way, although his original construction 
required the image of $\kappa$ to lie in the group algebra $\CC G$.
Indeed, when $\kappa$ has image in $kG$, 
a Drinfeld orbifold algebra $\cH_{\kappa}$ is called 
a {\bf Drinfeld Hecke algebra}.
These algebras are also called
{\bf graded Hecke algebras}, as the graded affine Hecke algebra
defined by Lusztig~\cite{Lusztig88,Lusztig} 
is a special case (arising when $G$ is a Coxeter
group, see~\cite[Section~3]{RamShepler}).  Note that 
symplectic reflection algebras are also examples of these algebras.
 
Drinfeld orbifold algebras compose a large class of deformations
of the skew group algebra $S\#G$, as explained in this paper.
We determine necessary and sufficient
conditions on $\kappa$ so that $\cH_{\kappa}$ satisfies the PBW condition
and interpret these conditions in terms of Hochschild cohomology.
To illustrate, we give several small examples in Sections~\ref{sec:nands}
and \ref{sec:loa}.
We show that a special case of this construction is a class of
deformations of the skew group algebras ${\mathcal U}\# G$, where $\mathcal{U}$
is the universal enveloping algebra of a finite dimensional Lie algebra
upon which $G$ acts.
These deformations are termed {\bf Lie orbifold algebras}.

For example, consider
the Lie algebra ${\mathfrak{sl}}_2$
of $2\times 2$ matrices over $\CC$ having trace $0$
with usual basis $e,f,h$. A cyclic group $G$ of order~$2$ generated
by $g$ acts as follows: $ {}^g e = f, \ {}^gf = e, \ {}^g h=-h$.
Let $V$ be the underlying $\CC$-vector space of ${\mathfrak{sl}}_2$
and consider the quotient 
$$
   T(V)\# G / (eh-he+2e -g, \ hf-fh+2f-g, \ ef-fe-h).
$$
We show in Example~\ref{example:sl2} that this quotient 
is a Lie orbifold algebra.
Notice that if we delete the degree 0 term (that is, the group element $g$)
in each of the first two relations above, we obtain the skew group algebra 
${\mathcal{U}}({\mathfrak{sl}}_2)\# G$. If we delete
the degree 1 terms
instead, we obtain a Drinfeld Hecke algebra (i.e., graded Hecke algebra).
(This is a general property of Lie orbifold algebras that 
we make precise 
in Proposition~\ref{PBW}.)

We assume throughout that $k$ is a field whose characteristic is not 2.
For our homological results in Sections \ref{sec:koszul} through
\ref{sec:abel}, 
we require in addition that the order of $G$ is invertible in $k$ and that
$k$ contains all eigenvalues of the actions of elements of $G$ on $V$;
this assumption is not needed for the first few sections.
All tensor products will be over $k$ unless otherwise indicated.

\section{Deformations of Skew Group Algebras}
\label{deformations}

Before exploring necessary and sufficient conditions 
for an arbitrary
quotient algebra to define a Drinfeld orbifold algebra, we 
explain the connection between these algebras and
deformations of the skew group algebra $S\#G$.
Recall that $S\#G$ is the $k$-vector space $S\ot kG$
with algebraic structure given by
$(s_1\otimes g)(s_2\otimes h)=s_1\, ^g(s_2)\otimes gh$
for all $s_i$ in $S$ and $g,h$ in $G$. Here, $^gs$
denotes the element resulting from the 
group action of $g$ on $s$ in $S$.  Recall that we drop
the tensor symbols and simply write, for example, 
$s_1gs_2h=s_1\, ^gs_2 gh$. 
We show in the next theorem how Drinfeld orbifold algebras arise
as a special class of deformations of $S\# G$.

First, we recall some standard notation.
Let $R$ be any algebra over the field $k$, and let $t$ be an indeterminate.
A {\bf deformation of $R$ over $k[t]$} is an associative $k[t]$-algebra
with underlying vector space $R[t]$ and multiplication  determined by 
$$
  r*s = rs + \mu_1(r\ot s)t + \mu_2(r\ot s) t^2 + \cdots
$$
for all $r,s\in R$, where $rs$ is the product of $r$ and $s$ in $R$,
the $\mu_i: R\ot R\rightarrow R$ are $k$-linear maps that are extended to
be $k[t]$-linear, and 
the above sum is finite for each $r,s$.

We adapt our definition of $\cH_{\kappa}$ to that of an algebra over $k[t]$.
First, decompose $\kappa$ into its constant and linear parts:
Let $$
\kappa= \kappa^C+\kappa^L
\quad\text{where}\quad
\kappa^C:V\wedge V\rightarrow kG,\ \ \ 
\kappa^L:V\wedge V\rightarrow V\ot kG\ .
$$
Write 
$$\kappa=\sum_{g \in G} \kappa_g\ g $$
where each (alternating, bilinear) map
$\kappa_g:V\times V\rightarrow  k \oplus V$
also decomposes into constant and linear parts:
$$
\kappa_g = \kappa_g^C+\kappa_g^L
\quad\text{where}\quad
\kappa_g^C:V\wedge V\rightarrow k,\ \ \ 
\kappa_g^L:V\wedge V\rightarrow V\ .
$$
Now let 
$$
   \cH_{\kappa, t} := T(V)\# G [t]/(v w- w v 
                      -\kappa^L(v,w) t             
                     - \kappa^C(v,w)t^2
                            \mid v,w\in V).
$$
We call $\cH_{\kappa, t}$ a {\bf Drinfeld orbifold
algebra over $k[t]$} whenever $\cH_{\kappa}$ is a Drinfeld orbifold algebra;
in this case, $\cH_{\kappa,t}$
is a deformation of $S\# G$ over $k[t]$ and $\cH_{\kappa,t}/t\cH_{\kappa,t}
\cong S\# G$.

The following theorem extends~\cite[Theorem~3.2]{Witherspoon}
(in the case of a trivial twisting cocycle) to our setting.
We note that in case $\kappa^L \equiv 0$, a change of formal parameter
allows us to replace $t^2$ by $t$ in the definition of $\cH_{\kappa,t}$,
thus giving the Drinfeld Hecke algebras (i.e., graded Hecke algebras) 
over $k[t]$ (defined in~\cite{Witherspoon})
as a special case. We use standard notation for graded linear maps:
If $W$ and $W'$ are graded vector spaces, a linear map $\alpha: W \rightarrow W'$
is  homogeneous of {\bf degree} $\deg \alpha$ if 
$\alpha(W_i) \subseteq W_{i + \deg \alpha}$ for all $i$.

\begin{thm}
\label{degreemu_1}
The Drinfeld orbifold algebras $\cH_{\kappa,t}$ over $k[t]$ are precisely 
the deformations of $S\# G$ over $k[t]$ for which $\deg \mu_i = -i$ 
and for which $kG$ is in the kernel of $\mu_i$ for all $i\geq 1$.
\end{thm}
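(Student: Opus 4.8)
The plan is to establish a bijection between the two classes described in the theorem by translating the defining relations of $\cH_{\kappa,t}$ directly into the multiplication maps $\mu_i$ of a deformation, and conversely. The theorem is really an iff statement with two directions, and the structural content is that the filtered-degree bookkeeping of $\kappa = \kappa^L + \kappa^C$ matches exactly the graded-degree conditions $\deg\mu_i = -i$. First I would recall the vector space identification: both $\cH_{\kappa,t}$ and any deformation of $S\#G$ have underlying space $(S\#G)[t]$, so the PBW basis $\{v_1^{m_1}\cdots v_n^{m_n} g\}$ (available precisely because $\cH_\kappa$ is a Drinfeld orbifold algebra) lets me identify $\cH_{\kappa,t}/t\cH_{\kappa,t}$ with $S\#G$ and read off the $\mu_i$ from the deformed product. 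The key computation is that rewriting a product $vw$ in $\cH_{\kappa,t}$ using the relation $vw = wv + \kappa^L(v,w)t + \kappa^C(v,w)t^2$ exhibits $\mu_1$ as the map sending $v\ot w \mapsto$ (the antisymmetrization giving $\kappa^L$-type linear terms) and $\mu_2$ as the one producing the constant $\kappa^C$-term, with all higher $\mu_i$ determined by iterating these reorderings.

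For the forward direction I would argue that when $\cH_{\kappa,t}$ is a Drinfeld orbifold algebra over $k[t]$, the stated paragraph preceding the theorem already asserts it is a deformation of $S\#G$. So the work is checking the two degree/kernel conditions. Since $\kappa^L$ lands in $V\ot kG$ (degree-1 part) attached to $t^1$ and $\kappa^C$ lands in $kG$ (degree-0 part) attached to $t^2$, and since $S\#G$ carries the grading with $V$ in degree $1$ and $G$ in degree $0$, each application of the straightening relation lowers graded degree by exactly the power of $t$ introduced: the $t^1$-term drops degree by $1$ (replacing $vw$, degree $2$, by $\kappa^L$, degree $1$) and the $t^2$-term drops degree by $2$. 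Iterating, the coefficient of $t^i$ in any reordered product is a sum of terms each of total degree drop $i$, which gives $\deg\mu_i = -i$ for all $i$. The kernel condition is immediate: the relations only rewrite products $vw$ of elements of $V$, so a product involving a group-algebra factor $g\in kG$ on either side contributes nothing to any $\mu_i$ for $i\ge 1$; that is, $\mu_i|_{kG\ot(S\#G)} = \mu_i|_{(S\#G)\ot kG} = 0$.

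For the converse I would take a deformation $*$ of $S\#G$ satisfying $\deg\mu_i=-i$ and $kG\subseteq\ker\mu_i$, and reconstruct $\kappa$. Evaluating the product $v*w - w*v$ for $v,w\in V$ and using $\deg\mu_i=-i$ forces $\mu_1(v\ot w)\in V\ot kG$ and $\mu_2(v\ot w)\in kG$, with $\mu_i(v\ot w)=0$ for $i\ge 3$ by degree reasons (there is no negative-degree part of $S\#G$ below $kG$). Defining $\kappa^L(v,w) := \mu_1(v\ot w)-\mu_1(w\ot v)$ and $\kappa^C(v,w) := \mu_2(v\ot w)-\mu_2(w\ot v)$ recovers an alternating $\kappa$, and I must verify that $*$ agrees with the product in $\cH_{\kappa,t}$. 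This is where the kernel condition does its job: because $kG$ lies in every $\mu_i$'s kernel, the deformed product is determined by its values on $V$-generators together with the known $S\#G$-module structure, so $*$ coincides with the multiplication forced by the relations of $\cH_{\kappa,t}$.

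\textbf{The main obstacle} I anticipate is the converse verification that an abstract deformation satisfying the two conditions genuinely has underlying relations of the prescribed shape and automatically satisfies PBW—in other words, that the reconstructed $\cH_{\kappa,t}$ is honestly associative and flat over $k[t]$. Associativity of $*$ is given, but I must confirm that the induced map $\kappa$ does not secretly require higher corrections: I would check that $\mu_i(v\ot w)=0$ for $i\ge 3$ (degree) and that no relations among the $v^{m}g$ collapse, which follows because a deformation of $S\#G$ is by definition free over $k[t]$ with $*\bmod t$ equal to the $S\#G$ product. The delicate point is reconciling the requirement that the \emph{symmetric} parts $\mu_i(v\ot w)+\mu_i(w\ot v)$ be absorbed into the identification with $S\#G$ rather than contributing new relations; I expect these symmetric parts to be fixed by associativity and the normalization $\mu_i|_{kG}=0$, so that the map $\kappa\mapsto\cH_{\kappa,t}$ and the map (deformation)$\mapsto\kappa$ are mutually inverse, completing the bijection.
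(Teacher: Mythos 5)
Your proposal is correct and follows essentially the same route as the paper's proof: identify $\cH_{\kappa,t}$ with $S\#G[t]$ via the PBW basis and induct on degree for the forward direction, then in the converse recover $\kappa^L,\kappa^C$ as the antisymmetrizations of $\mu_1,\mu_2$, build the algebra map from $T(V)\#G[t]$ (well-defined by the kernel condition, surjective by the degree condition), and show its kernel is exactly the ideal of relations. The one step you leave as an expectation—that the reconstructed quotient has no extra collapse and satisfies PBW—is exactly what the paper settles with its dimension count in each filtered degree, which is the precise form of your appeal to freeness of the deformation over $k[t]$.
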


The hypothesis that $kG$ is in the kernel of all $\mu_i$ 
is a reasonable one when the characteristic of $k$ does not
divide the order of $G$: 
In this case  one may choose to work with maps 
that are linear over the semisimple 
ground ring $kG$ as in~\cite{BGS,EtingofGinzburg}. 
There are however alternative ways to express
Drinfeld Hecke algebras for which this hypothesis is not true.
See~\cite[Theorem~3.5]{RamShepler} for a comparison with
Lusztig's equivalent definition of a Drinfeld (graded) Hecke
algebra in which the group action relations are deformed.

\begin{proof}
Assume $\cH_{\kappa, t}$ is a Drinfeld orbifold algebra over $k[t]$.
Let $v_1,\ldots,v_n$ be a basis of the vector space $V$, so that
$$
  \{v_1^{i_1}\cdots v_n^{i_n}\mid i_1,\ldots,i_n\in {\Z_{\geq 0}} \}
$$
is a basis of $S$.
Since ${\rm gr} \cH_{\kappa}\cong S\# G$, there is a
corresponding basis $\cB$ of $\cH_{\kappa}$ 
given by all $v_1^{i_1}\cdots v_n^{i_n} g$,
where $g$ ranges over all elements in $G$ and $i_1,\ldots,i_n$ range over all
nonnegative integers.  Hence, we may identify $\cH_{\kappa,t}$
with $S\# G[t]$ as a $k$-vector space.  As $\cH_{\kappa,t}$
is associative, it defines a deformation of $S\#G[t]$ as follows.

Let $r=v_1^{i_1}\cdots v_n^{i_n}g$ and $s=v_1^{j_1}\cdots v_n^{j_n} h$ be
elements of $\cB$. 
For clarity, we denote the product in $\cH_{\kappa,t}$ by $*$.
Using the relations of $\cH_{\kappa, t}$ to express the product
$r*s$ as a linear combination of elements of $\cB$,
we may expand uniquely: 
$$
  r*s = rs +\mu_1(r,s)t + \mu_2(r,s)t^2+\cdots + \mu_m(r,s)t^m
$$
for some $m = m_{r,s}$ depending on $r,s$, and some $\mu_1,\ldots,\mu_m$.
By the definition of $\cH_{\kappa,t}$ as a quotient of $T(V)\# G[t]$, 
the group algebra $kG$ is in the kernel of $\mu_i$ for all $i$. 
Using the relations in $\cH_{\kappa}$, we have 
$$
  r*s = ((v_1^{i_1}\cdots v_n^{i_n})* ( {}^g (v_1^{j_1}\cdots v_n^{j_n})))\ gh.
$$
We apply the relations 
of $\cH_{\kappa,t}$ repeatedly to rewrite the product 
$(v_1^{i_1}\cdots v_n^{i_n})*( {}^g (v_1^{j_1}\cdots v_n^{j_n}))$
as an element in the $k$-span of $\cB$.
We prove by induction on the degree $d=\sum_{l=1}^n (i_l+j_l)$ that
$\deg \mu_i = -i$.
It suffices to prove this in case $g=1$.
If $d=0$ or $d=1$, the maps $\mu_i$ give 0, and so they satisfy the degree requirement
trivially. 
Similarly, whenever $a<b$,
$v_a*v_b$ in $\cH_{\kappa,t}$ identifies with $v_a v_b$ in $S$,
and $\mu_i(v_a, v_b)=0$ for all $i$. 
Thus 
if $d=2$, the nontrivial case is when some $i_l=1$ and some $j_m=1$
with $l>m$. Then 
$$
 v_{l}*v_{m}-v_{m}*v_{l} =  \kappa^L(v_{l}, v_{m}) t
   + \kappa^C(v_{l},v_{m}) t^2.
$$
By construction, $\mu_1(v_{l}, v_{m}) = \kappa^L(v_{l}, v_{m})$,
an element of $V\ot kG$, and
the map $\mu_1$ has degree $-1$ on this input.
Similarly, $\mu_2(v_{l}, v_{m}) = \kappa^C(v_{l},v_{m})$,
which has degree $-2$ on this input. 

Now assume $d>2$ is arbitrary and $d=\sum_{l=1}^n (i_l+j_l)$.
Without loss of generality, assume $i_n\geq 1$, $j_1\geq 1$,
and then
\begin{eqnarray*}
(v_1^{i_1}\cdots v_n^{i_n})*(v_1^{j_1}\cdots v_n^{j_n}) & = &
  (v_1^{i_1}\cdots v_n^{i_n-1})*(v_1v_n * v_1^{j_1-1}\cdots v_n^{j_n})\\
&&+ (v_1^{i_1}\cdots v_n^{i_n-1}) * (\kappa^L(v_n,v_1) * v_1^{j_1-1}
   \cdots v_n^{j_n}) t \\
  &&+ (v_1^{i_1}\cdots v_n^{i_n-1}) * (\kappa^C(v_n,v_1)
   * v_1^{j_1-1}\cdots v_n^{j_n}) t^2 .
\end{eqnarray*}
In the second and third terms, we see that the degree lost by applying the
map is precisely that gained in the power of $t$.
In the first term, no degree was lost and no power of $t$ was gained,
however the factors are one step closer to being part of a PBW basis.
By induction, the degrees of the $\mu_i$ are as claimed.
Equivalently, we may give $t$ a degree of 1, making $\cH_{\kappa,t}$
a graded algebra, and argue as in~\cite{BravermanGaitsgory,DCY}.

Now assume that $A$ is any deformation of $S\# G$ over $k[t]$
for which $\deg\mu_i=-i$ and for which $kG$ is in the kernel of
$\mu_i$ for all $i\geq 1$.
 By definition, $A$ is isomorphic to $S\# G [t]$ as a 
vector space over $k[t]$.
Fix a basis $v_1,\ldots,v_n$ of $V$.
Let $\phi: T(V)\# G[t]\rightarrow A$ be the $k[t]$-linear
map given by
$$
  \phi(v_{i_1}\cdots v_{i_m} g) = v_{i_1}*\cdots * v_{i_m} * g
$$
for all words $v_{i_1}\cdots v_{i_m}$ and group elements $g$.
Since $T(V)$ is free on $v_1,\ldots, v_n$ and by hypothesis,
$\mu_i (kG,kG)=\mu_i
(kG,V)=\mu_i(V,kG) = 0$ for all $i\geq 1$, the map $\phi$ is in
fact an algebra homomorphism.
It may be shown by induction on degree that $\phi$ is surjective,
using the degree hypothesis on the maps $\mu_i$.

We next find the kernel of $\phi$. Let $v,w\in V$ be elements of
the basis. Then
\begin{eqnarray*}
  \phi(vw) &=& v*w = vw + \mu_1(v,w)t + \mu_2(v,w)t^2\\
 \phi(wv) &= & w*v = wv + \mu_1(w,v)t +\mu_2(w,v)t^2
\end{eqnarray*}
since $\deg\mu_i = -i$ for each $i$.
Since $vw=wv$ in $S$, we have 
$$
  \phi(vw-wv) = (\mu_1(v,w)-\mu_1(w,v))t -
     (\mu_2(v,w)-\mu_2(w,v))t^2.
$$
It follows that
\begin{equation}\label{ideal}
vw-wv - (\mu_1(v,w)-\mu_1(w,v))t - (\mu_2(v,w)-\mu_2(w,v))t^2
\end{equation}
is in the kernel of $\phi$, since $\phi(vg)=vg$ and $\phi(g)=g$
for all $v\in V$ and $g\in G$.
By the degree conditions on the $\mu_i$, there are functions
$\kappa_g^L : V\wedge V \rightarrow V\otimes kG$ and
$\kappa^C_g: V\wedge V\rightarrow kG$ for all $g\in G$ such that
\begin{eqnarray}
  \mu_1(v,w)- \mu_1(w,v) & = & \sum_{g\in G} \kappa^L_g(v,w) g
   \label{eqn:kappa-mu} \\
  \mu_2(v,w)-\mu_2(w,v) & = & \sum_{g\in G} \kappa^C_g(v,w) g.
    \label{eqn:kappa-mu2}
\end{eqnarray}
For each $g\in G$, the functions 
$\kappa_g^C: V\wedge V \rightarrow kG$
and
$\kappa^L_g: V\wedge V \rightarrow V\otimes kG$ are linear
(by their definitions).
Let $I[t]$ be the ideal of $T(V)\# G [t]$ generated
by all expressions of the form (\ref{ideal}), so by definition
$I[t]\subset \ker \phi$.
We claim that in fact $I[t]=\ker \phi$: The quotient $T(V)\# G [t]/I[t]$
is by definition a filtered algebra over $k[t]$ whose associated graded 
algebra is necessarily $S\# G [t]$ or a quotient thereof.
By a dimension count in each degree, since $I[t]\subset \ker\phi$,
this forces $I[t]=\ker \phi$.
Therefore $\phi$ induces an isomorphism from $\cH_{\kappa,t}$ to $A$
and thus the deformation $A$ of $S\# G$ is isomorphic to a Drinfeld orbifold algebra.
\end{proof}

\begin{remark}\label{remark:kappa-mu}
{\em 
When working with a Drinfeld orbifold
algebra, we may always assume the relations (\ref{eqn:kappa-mu}) 
and (\ref{eqn:kappa-mu2}) hold for $v,w$ in $V$
as a consequence of the proof.
In a later section, we will make more explicit this connection between
the functions $\mu_i$ and $\kappa$, using Hochschild cohomology in
case the characteristic of $k$ does not divide the order of $G$: 
We will consider the $\mu_i$ to be cochains on the bar resolution of
$S\# G$, and $\kappa$ to be a cochain on the Koszul resolution of $S$.
The relations (\ref{eqn:kappa-mu}) and (\ref{eqn:kappa-mu2}) then result from
applying chain maps to convert between the two resolutions. 
Specifically, let 
$\phi_{\DOT}$ be a map from the Koszul resolution to the bar resolution
of $S$ (a subcomplex of the bar resolution of $S\# G$). 
Then  $\kappa^L = \mu_1 \circ\phi_2$, as we will explain.
}
\end{remark}

\section{Necessary and Sufficient Conditions}\label{sec:nands}
We determine conditions on the parameter
$\kappa$ for $\cH = \cH_\kappa$ to satisfy the PBW condition.
In the setting of  symplectic reflection algebras,
Etingof and Ginzburg~\cite[Theorem~1.3]{EtingofGinzburg}
used a generalization of results of Braverman and Gaitsgory
~\cite[Theorem~0.5 and Lemma~3.3]{BravermanGaitsgory}
that replaces the ground field $k$
with the (semisimple) group ring $kG$.
This approach was then adopted in
Halbout, Oudom, and Tang~\cite{HOT}.
Since one of the conditions in \cite{HOT} 
is missing a factor of 2, we include
two proofs of the PBW conditions for Drinfeld orbifold algebras, 
one using this generalization of work of Braverman and Gaitsgory,
and one using Bergman's Diamond Lemma~\cite{Bergman}.
The second proof applies in all characteristics other than 2,
even those dividing the order of $G$,
while the first requires $kG$ to be semisimple.
(See~\cite{Khare} for the Diamond Lemma argument applied 
in a related setting and see~\cite{LevandovskyyShepler} for a related
approach 
using noncommutative Gr\"obner theory, but
in a quantum setting.)

The set of all parameter functions
$$ \kappa: V\wedge V \rightarrow (k \oplus V)\otimes kG $$
(defining the quotient algebras $\cH_{\kappa}$)
carries the usual induced $G$-action:
$(^h\kappa)(*)=\,  
 ^h(\kappa(^{h^{-1}}(*)))$, i.e., for all $h\in G$ and $v,w\in V$, 
$$(^h\kappa)(v,w) = \ ^h \! ( \kappa ( {}^{h^{-1}} \! v, {}^{h^{-1}} \! w))
=\sum_{g\in G}\ ^h(\kappa_g(^{h^{-1}}v,^{h^{-1}}w))\, hgh^{-1}\ .$$
We say that $\kappa$ is $G$-invariant when 
$^h\kappa = \kappa$ for all $h$ in $G$.
Let $\Alt_3$ denote the cyclic group of order 3 considered as
a subgroup of the symmetric group on 3 symbols. 
Note that the following theorem gives conditions in the symmetric algebra $S$.
We allow the field $k$ to have arbitrary characteristic other than 2. 

\begin{thm}\label{thm:4condns}
The algebra 
$\cH_{\kappa}$ is a Drinfeld orbifold algebra if and only if the following
conditions hold 
for each
$g$ in $G$ and $v_1,v_2,v_3$ in $V$:
\begin{itemize}
\item[(i)] The parameter function $\kappa$ is $G$-invariant,
\item[(ii)] 
\rule[0ex]{0ex}{4ex} 
$\displaystyle{\sum_{\sigma\in\Alt_3}}\, \kappa^L_g (v_{\sigma(2)}, v_{\sigma(3)})
   (v_{\sigma(1)}- {}^g v_{\sigma(1)}) = 0$ in $S=S(V)$,
\item[(iii)]
\rule[0ex]{0ex}{4ex} 
\vspace{-4ex}
$$
\begin{aligned}
\hphantom{xx}
\sum_{\sigma\in \Alt_3} \sum_{h\in G} \, 
\kappa^L_{gh^{-1}}
    \big(
 v_{\sigma(1)}+\ ^hv_{\sigma(1)},\ & \kappa^L_h(
v_{\sigma(2)}, v_{\sigma(3)})\big)\\
   &=2 \sum_{\sigma\in \Alt_3}\ \kappa_g^C(v_{\sigma(2)}, v_{\sigma(3)})
    ( {}^g v_{\sigma(1)} - v_{\sigma(1)})\ ,
\end{aligned}
$$
\item[(iv)] 
\rule[0ex]{0ex}{4ex} 
$\displaystyle{\sum_{\sigma\in\Alt_3}\sum_{h\in G}}\ 
\kappa^C_{gh^{-1}}\big(
v_{\sigma(1)}+\ ^hv_{\sigma(1)},
\kappa^L_h(v_{\sigma(2)}, v_{\sigma(3)})\big) = 0$.
\end{itemize}
\end{thm}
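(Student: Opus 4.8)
The plan is to prove both implications simultaneously using Bergman's Diamond Lemma~\cite{Bergman}, since this approach is elementary, needs no semisimplicity of $kG$, and works in every characteristic other than $2$. First I would realize $\cH_\kappa$ as the quotient of the free $k$-algebra on the generating set $\{v_1,\dots,v_n\}\cup G$ by a reduction system of three families: the group multiplications $gh\to gh$, the straightening rules $g\,v_i\to ({}^g v_i)\,g$ coming from the skew group structure, and the reordering rules $v_j v_i\to v_i v_j+\kappa(v_j,v_i)$ for $j>i$ (with $\kappa$ extended bilinearly, its output $\sum_h\kappa_h(v_j,v_i)h$ being already in normal form). With a suitable semigroup order (group elements largest, then total degree, then lexicographic on the $v_i$), every rule is order-decreasing and the irreducible monomials are exactly $v_1^{m_1}\cdots v_n^{m_n}g$. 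By the Diamond Lemma, $\cH_\kappa$ satisfies the PBW condition, i.e.\ $\gr\cH_\kappa\cong S\#G$, if and only if every overlap ambiguity of this system is resolvable.

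Next I would enumerate the overlap ambiguities, which correspond to the shared middle letter of two length-two left-hand sides: there are exactly four types, namely $ghk$, $ghv_i$, $gv_jv_i$ (with $j>i$), and $v_kv_jv_i$ (with $k>j>i$). Two of these resolve automatically: the $ghk$ ambiguity is just associativity of $kG$, and the $ghv_i$ ambiguity resolves because ${}^g({}^h v_i)={}^{gh}v_i$, i.e.\ because $G$ acts by a genuine group action. Hence all content of the theorem is concentrated in the remaining two families of ambiguities.

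I would treat the ambiguity $gv_jv_i$ (with $j>i$) first and show it is equivalent to condition (i). Reducing $v_jv_i$ first and then straightening yields $({}^g v_i)({}^g v_j)\,g+\sum_h {}^g\!\big(\kappa_h(v_j,v_i)\big)\,gh$, whereas reducing $gv_j$ first and then reordering the vectors ${}^g v_j\cdot {}^g v_i$ yields $({}^g v_i)({}^g v_j)\,g+\kappa({}^g v_j,{}^g v_i)\,g$. The leading degree-two terms coincide, and matching the coefficient of each group element in the two remainders gives exactly $\kappa_{g'}({}^g v_j,{}^g v_i)={}^g\!\big(\kappa_{g^{-1}g'g}(v_j,v_i)\big)$ for all $g,g'$, which is precisely the $G$-invariance ${}^g\kappa=\kappa$ after substituting $x={}^g v_j$, $y={}^g v_i$.

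Finally, the heart of the argument is the three-vector ambiguity $v_kv_jv_i$. Resolving it along its two reduction paths and subtracting produces a single element of $(k\oplus V\oplus S_2)\otimes kG$ whose vanishing is forced; organizing this expression via the cyclic symmetry of the three indices (which appears because $\kappa$ is alternating) yields a sum over $\Alt_3$, and splitting by degree in $S$ separates it into three independent pieces: the degree-two piece is condition (ii), the degree-one piece is condition (iii), and the degree-zero piece is condition (iv). The delicate bookkeeping, and the main obstacle, is the \emph{second} application of $\kappa$: after the first reordering, $\kappa$ emits terms carrying a group element $h$, which must then be straightened past the surviving generator via $h\,v\to ({}^h v)\,h$ and fed back into $\kappa$; this is exactly what produces the twisted index $gh^{-1}$ and the symmetrized argument $v_{\sigma(1)}+{}^h v_{\sigma(1)}$ in (iii) and (iv) (one path contributing $v_{\sigma(1)}$, the other ${}^h v_{\sigma(1)}$). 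The most error-prone point is recovering the factor of $2$ on the right side of (iii): the constant part $\kappa^C$ feeds the degree-one obstruction from \emph{both} reduction paths as the remaining generator is straightened, and it is the omission of one of these contributions that drops the factor in the corresponding condition of~\cite{HOT}. As an independent check when $kG$ is semisimple, one can re-derive the same four conditions from the Braverman--Gaitsgory framework over the ground ring $kG$, matching $\kappa^L=\mu_1\circ\phi_2$ as in Remark~\ref{remark:kappa-mu}.
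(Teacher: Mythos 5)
Your proposal is correct and follows essentially the same route as the paper's second proof, which also applies Bergman's Diamond Lemma: the overlap ambiguities $gv_jv_i$ and $v_kv_jv_i$ you identify are exactly the conjugation and Jacobi identities the paper expands, yielding condition (i) from the former and conditions (ii)--(iv) from the degree decomposition $f=f_0+f_1+f_2$ of the latter. The only caveat is that the degree-one and degree-zero pieces are not literally (iii) and (iv) as "independent" conditions but become them only after substituting the skew-symmetry relation $D^g_{ab}=-D^g_{ba}$ forced by condition (ii) -- a dependency the paper handles explicitly and that your phrasing slightly glosses over, though it does not affect the validity of the combined equivalence.
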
 
\begin{proof}[Proof of Theorem~\ref{thm:4condns} using the theory
of Koszul rings over $kG$]
In this proof, we restrict to the case where the 
characteristic of $k$  does not divide the order of $G$.
The skew group algebra $S\# G$ is then a Koszul ring over $kG$,
as defined by Beilinson, Ginzburg, and Soergel 
(see \cite[Definition 1.1.2 and Section 2.6]{BGS}).
These authors worked with graded algebras in which the degree 0 component
is not necessarily commutative, but is a semisimple algebra.
In our case the degree 0 component of $S\# G$ is the semisimple
group algebra $kG$.
The results of Braverman and Gaitsgory \cite[Theorem~0.5 and Lemma~3.3]{BravermanGaitsgory}
can be extended to 
this general setting to give necessary and sufficient conditions
on $\kappa$ under which $\cH_{\kappa}$ is a Drinfeld orbifold algebra
(cf.~\cite[Proof of Theorem 1.3]{EtingofGinzburg}).
We give $V\ot kG$ a $kG$-bimodule structure as follows:
$ g (v\ot h) := {}^g v\ot gh$ and $(v\ot h)g := v\ot hg$ for
all $v\in V$, $g,h\in G$.
Note that $T:= T_{kG}(V\ot kG)$ is isomorphic to $T(V)\# G$ as 
an algebra. 

Let $P$ be the sub-$kG$-bimodule of $T$
generated by all $v\ot w - w\ot v -\kappa(v,w)$, for $v,w\in V$.
Let $R$ be the sub-$kG$-bimodule generated by all $v\ot w - w\ot v$,
for $v,w\in V$. Let $F$ be the standard filtration on $T$, that is, 
$F^0(T)=kG$, $F^1(T) = kG
\oplus (V\ot kG)$, $F^2(T) = kG\oplus (V\ot kG) \oplus (V\ot V\ot kG)$, 
and so on.
By \cite[Theorem~0.5]{BravermanGaitsgory}, $\cH_{\kappa}
\cong T_{kG}(V\ot kG)/P$ is a Drinfeld orbifold algebra if
and only if 
\begin{itemize} 
 \item[(I)] $\ P\cap F^1(T) = 0$ and 
\item[(J)] $\ (F^1(T)\cdot P \cdot F^1(T)) \cap F^2(T) = P$.
\end{itemize}
By \cite[Lemma~3.3]{BravermanGaitsgory}, if (I) holds, then (J) is equivalent to
the following three conditions, where $\alpha: R\rightarrow V\ot kG$,
$\beta: R\rightarrow kG$ are maps for which 
$$
P=\{r - \alpha(r)-\beta(r)\mid r\in R\}:
$$
\begin{itemize}
\item[(a)] $\Ima (\alpha\ot \id - \id\ot \alpha) \subset R$.
\item[(b)] $\alpha\circ (\alpha\ot \id - \id\ot\alpha) = - (\beta\ot\id 
-\id\ot\beta)$
\item[(c)] $\beta\circ (\id\ot\alpha - \alpha\ot\id) \equiv 0$.
\end{itemize}
The above maps $\alpha\ot\id - \id\ot\alpha$ and 
$\beta\ot\id - \id\ot\beta$ are defined on the intersection 
$(R\ot_{kG} (V\ot kG))\cap ((V\ot kG)\ot_{kG} R)$. 
Extend $\kappa$ to an alternating $k G$-module map
on $T^2:= T_{kG}^2(V\ot k G)$, so that
$\kappa(g(v\ot w)h)=\kappa(\, ^gv\ot\ ^g w)\,gh$ for all $g,h$ in $G$
and $v,w$ in $V$.
Then
$\alpha (v \ot w - w \ot v) = \kappa^L(v \ot w)=
(1/2) \kappa^L (v \ot w - w \ot v)$ (as $\kappa$ is alternating) and 
for all $r$ in $R$,
$$2\alpha(r)= \kappa^L(r)\ .$$
(Similarly, $2\beta(r)= \kappa^C(r)$ for all $r$ in $R$.)

First note that (I) is equivalent to the condition that $G$ preserves
the vector space generated by all $v\ot w - w\ot v - \kappa^L(v,w)
-\kappa^C(v,w)$, i.e., this space contains
$$
  {}^g v \ot {}^g w - {}^g w\ot {}^g v - {}^g(\kappa^L(v,w))
   - {}^g (\kappa^C(v,w))
$$
for each $g\in G$, $v,w\in V$.
Equivalently, $\kappa^L( {}^gv, {}^g w) =
{}^g(\kappa^L(v,w))$ and $\kappa^C({}^gv,{}^gw) = {}^g (\kappa^C(v,w))$, i.e., 
both $\kappa^L$ and $\kappa^C$ are $G$-invariant, yielding
Condition~(i) of the theorem.

We assume now that $\kappa$ is $G$-invariant and proceed with the 
remaining conditions.

Condition~(a): 
As a $kG$-bimodule, $(R\ot_{kG} (V\ot kG))\cap((V\ot kG)\ot_{kG} R)$ is
generated by elements of the form $\sum_{\sigma\in S_3} (\sgn\sigma )
v_{\sigma(1)}\ot v_{\sigma(2)}\ot v_{\sigma(3)}$, so we find the image
of $\alpha\ot\id - \id\ot\alpha$ on these elements.
After reindexing, we obtain
$$
  \sum_{\sigma\in\Alt_3} (\kappa^L(v_{\sigma(2)},v_{\sigma(3)})\ot v_{\sigma(1)}
  - v_{\sigma(1)}\ot \kappa^L(v_{\sigma(2)}, v_{\sigma(3)})).
$$
We decompose into components indexed by $g$ in $G$ and 
shift all group elements to the right (tensor products are over $kG$).
The $g$-th summand is then
\begin{equation}\label{elt-R}
  \sum_{\sigma\in \Alt_3} (\kappa^L_g(v_{\sigma(2)},v_{\sigma(3)})\ot
   {}^g v_{\sigma(1)} - v_{\sigma(1)}\ot \kappa^L_g(v_{\sigma(2)},
   v_{\sigma(3)})) g,
 \end{equation}
which must be an element of $R$. This is equivalent to the
vanishing of its image in $S\# G$.
We rewrite this as Condition~(ii) of the theorem. 

Condition~(b): 
We assume Condition~(a) holds and thus~(\ref{elt-R}) is an element of $R$.
We compute the left side of Condition~(b) by applying $\alpha$ to this element.
Since $2\alpha(r)=\kappa^L(r)$ for all $r$ in $R$,
we obtain the left side of Condition~(iii) of the theorem
after dividing by 2.
Similarly, it is not difficult to see that
the right side of Condition~(b) agrees with the 
right side of Condition~(iii) of the theorem:
The image of $\sum_{\sigma\in S_3}(\sgn\sigma) v_{\sigma(1)}
\ot v_{\sigma(2)}\ot v_{\sigma(3)} $ under $- (\beta\ot\id - \id\ot\beta)$ is
$$
  -\sum_{\sigma\in\Alt_3, \ g\in G} (\kappa^C_g(v_{\sigma(2)},v_{\sigma(3)})
   \ot {}^g v_{\sigma(1)} - v_{\sigma(1)}\ot \kappa^C_g (v_{\sigma(2)}, v_{\sigma(3)})) g 
$$
(as an element of $V\ot kG$) which we rewrite as
$$
  - \sum_{\sigma\in\Alt_3, \ g\in G} (\kappa^C_g(v_{\sigma(2)}, v_{\sigma(3)})
  ( {}^g v_{\sigma(1)} - v_{\sigma(1)})) g.
$$

Condition~(c): An analysis similar to that for Condition~(b) yields 
Condition~(iv) of the theorem. 
\end{proof}

\begin{proof}[Proof of Theorem~\ref{thm:4condns} using the Diamond Lemma]
In this proof, the characteristic of $k$ may be 0 or any odd prime. 
We apply~\cite{Bergman} to obtain 
conditions on $\kappa$ equivalent to existence of a 
PBW basis and then argue that these conditions
are equivalent to those in the theorem.
We suppress details and merely record
highlights of the argument (which requires one to 
fix a monomial ordering and check
all overlap/inclusion ambiguities on
the set of relations defining $\cH_{\kappa}$),
as described, for example, in~\cite[Chapter~3]{AlgorithmicMethods}).
Fix a basis $v_1,\ldots, v_n$ of $V$ and
let $\cB$ be our prospective PBW basis: Set 
$
\cB=\{v_1^{\alpha_1}\cdots v_n^{\alpha_n}g:
\alpha_i\in \ZZ_{\geq 0}, g\in G\}
\subset T(V)\ot kG\ ,
$ 
a subset of the free algebra $\F$ generated by
$v$ in $V$ and $g$ in $G$. 

Using the Diamond Lemma, one may show that necessary and sufficient conditions 
for $\cH_{\kappa}$ to satisfy the PBW condition arise from expanding 
conjugation and Jacobi identities in $\cH_{\kappa}$:
For every choice of parameter $\kappa$, and for
every $v,w$ in $V$ and $h$ in $G$, the elements
$$
\begin{aligned}
& \quad (1)\ \ \ h[v,w]_{\cH}\, h^{-1} - [\,^hv,\, ^hw]_{\cH}\, , \quad\text{and}\\
& \quad (2)\ \ \ [v_i,[v_j,v_k]_{\cH}]_{\cH}\, +[v_j,[v_k,v_i]_{\cH}]_{\cH}\,
 +[v_k,[v_i,v_j]_{\cH}]_{\cH}\ 
\end{aligned}\\
$$ 
are always zero in the associative algebra $\cH$.
Here, $[a,b]_{\cH}:=a b-b a$ is just the commutator in $\cH$ of $a,b\in \cH$.
Using the relations defining $\cH$, we move all group elements to the 
right and arrange indices of basis vectors in increasing order
(apply straightening operations).

An analysis of elements of type (1) shows that 
a PBW property on $\cH_{\kappa}$ forces 
$^h\kappa = \kappa$ for all $h$ in $G$.  Indeed, this condition
is equivalent to 
$$
\kappa_{h^{-1}gh}(v,w)\ =\ \ ^{h^{-1}}(\kappa_g(\,^hv,\, ^hw))
\quad\text{for all } g,h\in G,\ v,w \in V\ .
$$

We next write each element of type (2) above in the image 
under the projection map
$\pi:\cF \rightarrow \cH$ 
of some $f(v_i, v_j, v_k)$ 
in the $k$-span of (potentially) nonzero elements of $\cB$.
In $\cH_\kappa$,
$$
\begin{aligned}
\
[v_1,&[v_2,v_3]_{\cH}]_{\cH}
= v_1 \kappa(v_2, v_3) - \kappa(v_2, v_3) v_1\\
%
&=
\sum_{g\in G}
v_1 \kappa_g^C(v_2,v_3)g - \kappa_g^C(v_2, v_3)g v_1
+v_1 \kappa_g^L(v_2,v_3)g - \kappa_g^L(v_2, v_3)g v_1\\
&=
\sum_{g\in G}
\Big(v_1 \kappa_g^C(v_2,v_3) - \kappa_g^C(v_2, v_3)\, ^g v_1
+v_1 \kappa_g^L(v_2,v_3) - \kappa_g^L(v_2, v_3)
\,^g v_1\Big)g\\
&=
\sum_{g\in G}
\Big(\kappa_g^C(v_2,v_3)(v_1 -\,  ^g v_1)
+v_1 \kappa_g^L(v_2,v_3) - \kappa_g^L(v_2, v_3)
\,^g v_1\Big)g\ .\\
\end{aligned}
$$
We apply further relations in $\cH$ 
to this last expression to rearrange the vectors $v_1, \ldots, v_n$ 
by adding terms of lower degree.  Thus, if we express $f$ as $f_0+f_1+f_2$ 
where $f_i$ has degree $i$ in the
free algebra $\cF$, 
then
$\pi(f_2)$ and 
$$
\sum_{g\in G}
\sum_{\sigma\in \Alt_3}
\kappa_g^L(v_{\sigma(2)},v_{\sigma(3)})
(v_{\sigma(1)}-\, ^g v_{\sigma(1)})\ g\ \\
$$
differ only by a rearrangement of vectors: 
They both project to the same element
under $T(V)\otimes kG \rightarrow S(V)\otimes kG$.
But $f_2$ is zero in the free algebra $\cF$ 
if and only if its image is zero in $S(V)\otimes kG$,
yielding Condition~(ii) of the theorem.

The other conditions of the theorem require a bit of manipulation.
One may show that
$$\begin{aligned}
f&=
\sum_{\sigma\in{\text{Alt}}_3 \atop g \in G}
\Big[
\kappa_g^C(v_{\sigma(2)},v_{\sigma(3)})
(v_{\sigma(1)} -\ ^g v_{\sigma(1)})
+
\sum_{a<b}
(D^g_{ab}+D^g_{ba})
\ v_a v_b
-
D^g_{ab}\ \kappa(v_a,v_b)\ 
\Big]g\ 
\end{aligned}
$$
where the $D^g_{ab}$ in $k$ are constants
determined by the action of $G$ on $V$
and the values of $\kappa$
expanded in terms of the fixed basis of $V$.
Specifically, 
$
D^g_{ab}\ :=\ 
\delta_{b,\sigma(1)} c_a^{\sigma(2),\sigma(3),g}
-c_b^{\sigma(2),\sigma(3),g}\, d_a^{\,\sigma(1), g}
$
where $\delta_{a,b}$ is the Kronecker delta symbol 
and where
$
^gv_a=\sum_{b} d_b^{\,a,g} v_b$
and $\kappa_g^L(v_a, v_b)
=\sum_m c_m^{a,b,g} v_m$ .

Note that $f_2$ is zero if and only if
$\sum_{\sigma\in{\text{Alt}}_3} (D_{ab}^g+ D_{ba}^g)=0$
for all $a<b$ and $g$ in $G$.
Thus, whenever $f_2$ is zero,
we may substitute $D_{ab}^g = -D_{ba}^g$ 
in the equation $0=2(f_0+f_1)$ to see that $f_0+f_1$
vanishes exactly when
$$
2 \sum_{g \in G}\ 
\kappa^C(v_{\sigma(2)},v_{\sigma(3)})
(v_{\sigma(1)} -\ ^g v_{\sigma(1)}) g
=
\sum_{\sigma\in{\text{Alt}}_3\atop g \in G}\
\sum_{a<b}\
(D^g_{ab}-D^g_{ba})
\kappa(v_a, v_b) \ g\ .
$$
We write the right-hand side as a sum over
all $a$ and $b$ (as $\kappa$ is alternating)
and obtain
$$\sum_{\sigma\in{\text{Alt}}_3 \atop g \in G}\
\kappa\Big(
\kappa^L_g
(v_{\sigma(2)},v_{\sigma(3)}),
v_{\sigma(1)}+\ ^gv_{\sigma(1)}
\Big)\ g\ .
$$
This yields Conditions~(iii) and~(iv) of the theorem
whenever Condition~(ii) holds.

Thus, the four conditions of the theorem are equivalent
to $G$-invariance of $\kappa$ and the vanishing of
all $f_0, f_1, f_2$ (for any $i,j,k$),
which in turn is equivalent to 
the PBW property for $\cH_{\kappa}$ by careful application of the Diamond Lemma.
\end{proof}

We illustrate the theorem by giving two examples for which
$\kappa^C$ is identically 0. In the next section we give
an example for which $\kappa^L$ and $\kappa^C$ are both nonzero.

\begin{example}\label{example:klein4group}
{\em
Let $G\cong \Z/2\Z\times \Z/2\Z$, with generators $g$ and $h$,
act on the complex vector space $V$ having basis $x,y,z$ by:
$$
\begin{aligned} & {}^gx = -x, \ \ \  {}^gy=y, \ \ \  {}^gz= -z,\\
  & {}^hx = -x, \ \ \  {}^hy = -y, \ \ \  {}^h z = z.
\end{aligned}
$$
Define an alternating bilinear map 
$\kappa^L: V\times V\rightarrow V\ot kG$ by
$$
   \kappa^L(x,y) = zh, \ \ \ \kappa^L(y,z) = xgh, \ \ \ \kappa^L(z,x)=yg,
$$
and let $\kappa^C \equiv 0$. One may check that $\kappa^L$ is 
$G$-invariant and that Conditions~(ii) and~(iii) of
Theorem~\ref{thm:4condns} hold. Condition~(iv) holds automatically since
$\kappa^C$ is identically 0.
The corresponding Drinfeld orbifold algebra is
$$
   T(V)\# G / ( \ [x,y]-zh, \ [y,z] - xgh, \ [z,x] -yg \ ).
$$
}\end{example}

\quad

\begin{example}\label{example:S3}
{\em
Let $G=S_3$ act by permutations on a basis $v_1,v_2,v_3$
of a complex three-dimensional vector space $V$.
Let $\xi$ be a primitive cube root of 1, and let
$$
    w_1 = v_1+\xi v_2+\xi^2 v_3, \ \ \ w_2=v_1 + \xi^2 v_2 + \xi v_3, \ \ \  
    w_3 = v_1 + v_2+ v_3.
$$
Define an alternating bilinear map 
$\kappa^L : V\times V \rightarrow V\ot kG$ by
$$
   \kappa^L(w_1,w_2) = w_3 ((1,2,3) - (1,3,2)),  \ \ \
    \kappa^L(w_2,w_3) =0, \ \ \ \kappa^L(w_1,w_3) = 0,
$$
and let $\kappa^C \equiv 0$, where $(1,2,3), (1,3,2)$ are the standard 
3-cycles in $S_3$. One may check that $\kappa^L$ is  $G$-invariant
and that Conditions~(ii) and~(iii) of Theorem~\ref{thm:4condns} hold. 
Condition~(iv) holds automatically since $\kappa^C$ is identically zero.
The corresponding Drinfeld orbifold algebra is 
$$
  T(V)\# G/ ( \ [w_1,w_2] - w_3((1,2,3) - (1,3,2)) , \ \ \ 
     [w_2,w_3] , \ \ \ [w_1, w_3] \ ).
$$  
}\end{example}

\quad

The conditions of Theorem~\ref{thm:4condns}
simplify significantly when $\kappa^L$ is supported on 
the identity element 
$1:=1_G$ of $G$ alone, 
and we turn to this interesting case in the next section.

\section{Lie Orbifold Algebras}\label{sec:loa}

The universal enveloping algebra of a finite-dimensional Lie algebra
is a special case of a Drinfeld orbifold algebra.
We extend universal enveloping algebras 
by groups and explore deformations of the resulting algebras
in this section.
Assume throughout this section that the linear part of our parameter $\kappa$ is supported on the identity $1=1_G$ of $G$ alone,
that is, $\kappa^L_g \equiv 0$ for all $g\in G - \{1\}$.
It is convenient in this section to use standard notation from 
the theory of Lie algebras
and Drinfeld Hecke algebras
(i.e., graded Hecke algebras): 
Let $$a_g: V\wedge V \rightarrow k \ \ \ \ \ (\mbox{for all }g\mbox{ in } G)$$
and
$$[\cdot, \cdot]_{\g}:V\wedge V \rightarrow V$$
be linear functions where $\g := V$ as a vector space with the additional
structure given by the map $[\cdot , \cdot ]_{\g}$. 
Define an algebra $\cH:=\cH(\g; a_g, g\in G)$ as the quotient
$$\cH = T(V) \# G   /    
(   
v  w - w  v  - [v,w]_{\g}-\sum_{g\in G} a_g(v,w) g
\mid v,w\in V ) .
$$
Then $\cH$ is a filtered algebra by its definition.
We say that $\cH$ is a {\bf Lie orbifold algebra} when it satisfies the
PBW condition, that is, when gr$\cH \cong S\# G$.
We determine necessary and sufficient
conditions on the functions $[\cdot, \cdot]_{\g}$ and $a_g$ for $\cH$ to be  
a Lie orbifold algebra.
We will see that the PBW condition implies that $[\cdot,\cdot]_\g$
defines a Lie bracket on $V$, thus explaining the choice of notation 
and terminology.

We first examine the Jacobi condition of 
Theorem~\ref{thm:4condns}
resulting from the Jacobi identity on $\cH$,
$$
0=[v_1,[v_2,v_3]_{\cH}]_{\cH}+[v_2,[v_3,v_1]_{\cH}]_{\cH}+[v_3,[v_1,v_2]_{\cH}]_{\cH}\\
\quad\text{for all}\ v_i\in V\ ,
$$
after taking
$\kappa^L (v,w) = [v,w]_{\mathfrak g}$ and
$\kappa^C(v,w) = \sum_{g\in G} a_g(v,w) g$:
\begin{lemma}\label{lemma:J}
Conditions~(ii), (iii), and (iv)
of Theorem~\ref{thm:4condns}
(i.e., the Jacobi condition)
hold  for
$$\cH = T(V) \# G   /    
(  
v \otimes w - w \otimes v  - [v,w]_{\g}-\sum_{g\in G} a_g(v,w,) g
\mid v,w,\in V )
$$
if and only if three conditions are met:
$$
\begin{aligned}
& 1.\ \ 
[\cdot , \cdot]_{\g}  \ \text{endows}\  \g:=V \  \text{with the structure
of a Lie algebra.} \\
& 2. \ \ 
\text{The Jacobi identity for Drinfeld Hecke algebras holds:}\ \ 
\text{for all } v_i\text{ in } V,\, g \text{ in } G,\\ 
& \quad\quad\quad 0 
 =\    a_g(v_2,v_3) (v_1 -\, ^g v_1) 
+
a_g(v_3,v_1) (v_2 -\, ^g v_2) 
+
a_g(v_1,v_2) (v_3 -\, ^g v_3)\ .\\
& 3. \ \ 
\text{The constant part of the parameter is compatible with the Lie bracket:}\\
& \quad\quad
0 =\ 
a_g(v_3, [v_1,v_2]_\g) 
+a_g(v_1,[v_2,v_3]_\g) 
+a_g(v_2, [v_3,v_1]_\g) \ \ \text{for all } 
v_i\text{ in } V,\, g \text{ in } G\, .
\end{aligned}
$$
\end{lemma}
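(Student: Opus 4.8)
The plan is to substitute the standing hypothesis $\kappa^L_g\equiv 0$ for $g\neq 1$ directly into Conditions~(ii), (iii), and~(iv) of Theorem~\ref{thm:4condns}, with the identifications $\kappa^L_1=[\cdot,\cdot]_\g$ and $\kappa^C_g=a_g$, and then simply read off which terms survive. The entire argument is bookkeeping, but three structural observations carry it: first, $\kappa^L_h(v_{\sigma(2)},v_{\sigma(3)})$ vanishes unless $h=1$, so every inner sum over $h\in G$ in Conditions~(iii) and~(iv) collapses to its $h=1$ term; second, when $h=1$ the factor $v_{\sigma(1)}+\,{}^hv_{\sigma(1)}$ becomes $2v_{\sigma(1)}$; and third, in Condition~(iii) the surviving outer factor $\kappa^L_{gh^{-1}}=\kappa^L_g$ then vanishes unless $g=1$ as well.

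First I would dispose of Condition~(ii): for $g\neq 1$ each summand vanishes because $\kappa^L_g\equiv 0$, while for $g=1$ it vanishes because ${}^1v_{\sigma(1)}=v_{\sigma(1)}$. Thus~(ii) holds automatically and imposes no constraint. Next I would analyze Condition~(iii), which is where the Jacobi identity for $\g$ emerges. After collapsing the $h$-sum to $h=1$, its left-hand side equals $\sum_{\sigma\in\Alt_3}\kappa^L_g(2v_{\sigma(1)},[v_{\sigma(2)},v_{\sigma(3)}]_\g)$, which is nonzero only for $g=1$, so I would split into two cases. For $g=1$ the left side is $2\sum_{\sigma\in\Alt_3}[v_{\sigma(1)},[v_{\sigma(2)},v_{\sigma(3)}]_\g]_\g$ and the right side vanishes (again since ${}^1v=v$), so~(iii) reduces exactly to the Jacobi identity; together with the antisymmetry of $[\cdot,\cdot]_\g=\kappa^L_1$ (automatic since $\kappa$ is alternating) this is Condition~1. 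For $g\neq 1$ the left side vanishes and the right side is $2\sum_{\sigma\in\Alt_3}a_g(v_{\sigma(2)},v_{\sigma(3)})({}^gv_{\sigma(1)}-v_{\sigma(1)})$; dividing by $2$ (legitimate as $\mathrm{char}\,k\neq 2$) and negating yields Condition~2.

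Finally I would treat Condition~(iv) identically: collapsing the $h$-sum to $h=1$ gives $2\sum_{\sigma\in\Alt_3}a_g(v_{\sigma(1)},[v_{\sigma(2)},v_{\sigma(3)}]_\g)=0$ for each $g\in G$, and dividing by $2$ produces Condition~3. Reversing each of these identifications shows conversely that Conditions~1, 2, and~3 imply~(ii), (iii), and~(iv), so the two sets of conditions are equivalent.

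The only step requiring genuine care is Condition~(iii). One must track the factor of $2$ produced by $v_{\sigma(1)}+\,{}^1v_{\sigma(1)}$ against the factor of $2$ already present on its right-hand side, and recognize that the $g=1$ and $g\neq 1$ components of the single Condition~(iii) encode two logically distinct requirements: the Jacobi identity for $\g$ on the one hand, and the Drinfeld Hecke compatibility of $a_g$ with the group action on the other. Everything else is a routine matching of $\Alt_3$-summands against the spelled-out forms in the statement.
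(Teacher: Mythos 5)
Your proposal is correct and follows essentially the same route as the paper's own proof: substitute $\kappa^L_1=[\cdot,\cdot]_\g$, $\kappa^L_g\equiv 0$ for $g\neq 1$, and $\kappa^C_g=a_g$ into Conditions~(ii)--(iv) of Theorem~\ref{thm:4condns}, and read off the surviving terms case by case in $g$ (with (ii) automatic, (iii) splitting into the Jacobi identity at $g=1$ and Condition~2 for $g\neq 1$, and (iv) giving Condition~3). The paper's proof is a terser version of the same bookkeeping; your tracking of the collapsed $h$-sums and the factors of $2$ (harmless since $\operatorname{char} k\neq 2$) just makes explicit the steps the paper leaves implicit.
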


\begin{proof}
When $g=1$, Theorem~\ref{thm:4condns} (ii) holds automatically, and (iii) is 
equivalent to the Jacobi identity on $[\cdot, \cdot ]_{\g}$.
For $g\neq 1$, Theorem~\ref{thm:4condns} (iii) becomes 
$$
  0 = \sum_{\sigma\in\Alt_3} a_g (v_{\sigma(2)}, v_{\sigma(3)}) ( {}^g v_{\sigma(1)} - v_{\sigma(1)}),
$$
which is Condition~(2) in the statement of the lemma.
Finally, Theorem~\ref{thm:4condns}~(iv) becomes Condition~(3) in this lemma.
\end{proof}

Thus
when $\cH$ satisfies the PBW condition, we may view the vector space $V$  as a Lie algebra $\g$
under a Lie bracket $[\cdot, \cdot]_{\g}$.
We now are ready to give conditions for $\cH$ to be a Lie orbifold 
algebra in terms of Drinfeld Hecke algebras.
The conditions on the bracket  $[\cdot, \cdot]_{\g}$ stated in the
following proposition are immediate consequences of 
Theorem~\ref{thm:4condns}(i) and Lemma~\ref{lemma:J}(1).  We use
the notion of ``compatible'' structures defining
a Lie bracket
and Drinfeld Hecke algebra as defined
in Lemma~\ref{lemma:J}(3).  
\begin{prop}
\label{PBW}
The quotient
$$\cH = T(V) \# G   /    
(
v \otimes w - w \otimes v  - [v,w]_{\g}-\sum_{g\in G} a_g(v,w,) g
\mid v,w,\in V )
$$
defines a Lie orbifold algebra
if and only if
$$
\begin{aligned}
&\ \ \ 1.\ 
\text{The bracket } [\cdot , \cdot]_{\g} \ \text{endows}\ \g:=V\   
\text{with the structure} \text{ of a Lie algebra } \text{upon }\\
& \quad\quad\ \ 
\text{which }G \text{ acts as automorphisms,} \\
&\ \ \ 2. \ \ 
\text{The parameters $\{a_g\}_{g\in G}$ define a Drinfeld Hecke algebra, and}\\
&\ \ \ 3. \ \
\text{The Lie bracket structure is compatible 
with the Drinfeld Hecke algebra }\\
& \quad\quad\ \
\text{structure}.
\end{aligned}
$$
\end{prop}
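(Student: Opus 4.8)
The plan is to combine Theorem~\ref{thm:4condns} with Lemma~\ref{lemma:J} and then repackage the resulting conditions; the only genuinely new work beyond the lemma is accounting for the $G$-invariance condition~(i). By Theorem~\ref{thm:4condns}, the quotient $\cH$ is a Lie orbifold algebra if and only if conditions~(i)--(iv) hold for the parameter $\kappa$ with $\kappa^L = [\cdot,\cdot]_{\g}$ (supported on $1_G$) and $\kappa^C = \sum_{g\in G} a_g\, g$. Lemma~\ref{lemma:J} already establishes that conditions~(ii), (iii), (iv) are equivalent to its three conditions, namely that $[\cdot,\cdot]_{\g}$ is a Lie bracket, that the $a_g$ satisfy the Jacobi identity for Drinfeld Hecke algebras, and that the bracket is compatible with the $a_g$. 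What remains is to analyze condition~(i), which the lemma does not address, and to regroup the pieces into the three statements of the proposition.

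First I would decompose condition~(i) along the constant and linear parts of $\kappa$. Since $\kappa^L$ takes values in $V$ (embedded in $V\otimes kG$ via $1_G$) and $\kappa^C$ takes values in $kG$, the $G$-invariance of $\kappa$ splits into independent invariance statements for $\kappa^L$ and $\kappa^C$. Invariance of $\kappa^L$ reads ${}^h([v,w]_{\g}) = [{}^h v, {}^h w]_{\g}$ for all $h\in G$, which says exactly that $G$ acts on $\g$ by Lie algebra automorphisms. Invariance of $\kappa^C$, after restricting to the scalar-valued components $a_g$ on which $h$ acts trivially, becomes a conjugation-equivariance of the form $a_{hgh^{-1}}({}^h v, {}^h w) = a_g(v,w)$, the usual invariance attached to a Drinfeld Hecke algebra.

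Then I would reassemble the pieces. Combining the $\kappa^L$-invariance (the automorphism property) with Lemma~\ref{lemma:J}(1) yields condition~1 of the proposition, exactly as noted in the sentence preceding the statement. To obtain condition~2, I would observe that a Drinfeld Hecke algebra is itself the special case $\kappa^L\equiv 0$ of Theorem~\ref{thm:4condns}: in that case conditions~(ii) and~(iv) are automatic, while~(i) and~(iii) reduce precisely to the conjugation-equivariance of the $a_g$ and to Lemma~\ref{lemma:J}(2); hence ``the $a_g$ define a Drinfeld Hecke algebra'' is exactly the $\kappa^C$-invariance together with Lemma~\ref{lemma:J}(2). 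Finally, condition~3 is verbatim Lemma~\ref{lemma:J}(3). The main obstacle here is organizational rather than computational: one must check that the definition of a Drinfeld Hecke algebra matches $\kappa^C$-invariance plus the single Jacobi-type identity of Lemma~\ref{lemma:J}(2) with no leftover requirements, so that once the $G$-invariance of $\kappa$ is correctly partitioned between conditions~1 and~2, the three statements of the proposition collectively capture all of~(i)--(iv) with neither redundancy nor omission.
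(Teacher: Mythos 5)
Your proposal is correct and takes essentially the same route as the paper: the paper offers no detailed proof, stating only that the proposition follows immediately from Theorem~\ref{thm:4condns}(i) together with Lemma~\ref{lemma:J}, which is precisely the combination you use. Your bookkeeping---splitting the $G$-invariance of $\kappa$ into separate invariance of $\kappa^L$ (the automorphism property) and of $\kappa^C$ (conjugation-equivariance of the $a_g$), and recognizing that ``the $a_g$ define a Drinfeld Hecke algebra'' is exactly the $\kappa^L\equiv 0$ case of Theorem~\ref{thm:4condns}, i.e.\ $\kappa^C$-invariance plus Lemma~\ref{lemma:J}(2)---simply makes explicit the details the paper leaves to the reader.
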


In the nonmodular setting, 
we may use previous analysis of Drinfeld Hecke algebras
(i.e., graded Hecke algebras)
to state the last proposition in more detail. 
The conditions on the functions $a_g$ in
the next proposition result from a comparison of 
Lemma~\ref{lemma:J}(2) and the invariance condition of 
Theorem~\ref{thm:4condns}(i)
with~\cite[Lemma~1.5, equations~(1.6) and~(1.7), 
and Theorem~1.9]{RamShepler}. 
\begin{prop}\label{overC}
Suppose that $\text{char}(k)$ does not divide $|G|$.
Then
$\cH_{}$  is a Lie orbifold algebra 
if and only if
$$
\begin{aligned}
& \ \ 1.\ \ \ \
\text{The map} \ [\cdot , \cdot]_{\g} \ \text{is a $G$-invariant Lie bracket,}\\
& \ \ 2a. \ \ \ 
\text{The parameters $\{a_g\}_{g\in G}$ are determined
on conjugacy classes, with} \hspace{1cm}\\
& \quad\quad\quad\quad
a_{hgh^{-1}}(v,w)=\ a_{g}(\, ^hv,\, ^hw)
\quad\text{for all } v,w \in V,\   g,h\in G \ ,\\
& \ \ 2b. \ \ \ 
\text{For each $g\neq 1$, either $a_g\equiv 0$ or}
\ \ker a_g = V^g \ \text{with}
\ \codim (V^g)=2, \\
& \ \ 3. \ \ \ \
\text{Each $a_g$ is compatible with the Lie bracket: for all } v_i\in V
\mbox{ and } g\in G\, ,\\
&\quad\quad\quad\quad
0\  = \ 
a_g(v_3, [v_1,v_2]_\g) 
+a_g(v_1,[v_2,v_3]_\g) 
+a_g(v_2, [v_3,v_1]_\g) \ .
\end{aligned}
$$
\end{prop}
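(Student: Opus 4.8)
The plan is to deduce Proposition~\ref{overC} from Proposition~\ref{PBW} by translating each of the three abstract conditions there into the explicit form stated here, using the assumption that $\text{char}(k)\nmid|G|$ together with the known classification of Drinfeld Hecke algebras in~\cite{RamShepler}. The structure of Proposition~\ref{PBW} already gives the skeleton: condition~1 (a $G$-invariant Lie bracket), condition~2 (the $\{a_g\}$ define a Drinfeld Hecke algebra), and condition~3 (compatibility). Conditions~1 and~3 are essentially verbatim, so the real work is condition~2.

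First I would dispense with condition~1: Proposition~\ref{PBW}(1) asserts that $[\cdot,\cdot]_\g$ is a Lie bracket on which $G$ acts as automorphisms. The statement that $G$ acts by automorphisms of the bracket is precisely the $G$-invariance of $\kappa^L=[\cdot,\cdot]_\g$ coming from Theorem~\ref{thm:4condns}(i), so this is exactly condition~1 of the present proposition. Condition~3 is identical in both propositions (it is just Lemma~\ref{lemma:J}(3)), so it carries over unchanged. Thus the content reduces to showing that, under $\text{char}(k)\nmid|G|$, the assertion ``$\{a_g\}$ defines a Drinfeld Hecke algebra'' (Proposition~\ref{PBW}(2)) is equivalent to the conjunction of conditions~2a and~2b stated here.

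For that equivalence I would invoke the cited analysis of Drinfeld Hecke algebras directly. By Lemma~\ref{lemma:J}(2), the parameters $\{a_g\}$ satisfy the Jacobi-type identity $\sum_{\sigma\in\Alt_3} a_g(v_{\sigma(2)},v_{\sigma(3)})(\,{}^g v_{\sigma(1)}-v_{\sigma(1)})=0$, and by Theorem~\ref{thm:4condns}(i) they satisfy the invariance condition $a_{hgh^{-1}}(v,w)={}^{?}$; comparing these two with~\cite[Lemma~1.5, equations~(1.6) and~(1.7), and Theorem~1.9]{RamShepler} gives the translation. The invariance condition from Theorem~\ref{thm:4condns}(i), written out with group elements shifted, becomes exactly $a_{hgh^{-1}}(v,w)=a_g(\,{}^h v,{}^h w)$, which is condition~2a. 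The classification result (Theorem~1.9 of~\cite{RamShepler}, valid precisely when $kG$ is semisimple, hence the hypothesis $\text{char}(k)\nmid|G|$) shows that the Jacobi-type identity of Lemma~\ref{lemma:J}(2) forces, for each $g\neq1$, either $a_g\equiv0$ or that $a_g$ has radical $V^g$ with $\codim V^g=2$; this is condition~2b. I would note that the identity $1_G$ contributes no constraint of type~2b since the term $\,{}^{1}v_{\sigma(1)}-v_{\sigma(1)}$ vanishes identically.

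The main obstacle will be condition~2b, since it is the only place where genuine external input is needed: the reduction of the bilinear Jacobi identity to the geometric statement about $\ker a_g$ and the codimension of $V^g$ is exactly the substance of~\cite[Theorem~1.9]{RamShepler}, and one must check that the semisimplicity of $kG$ (guaranteed by $\text{char}(k)\nmid|G|$) is what licenses invoking it. Everything else is bookkeeping: matching indices, shifting group elements to the right when comparing invariance conventions, and verifying that conditions~1 and~3 transfer verbatim. I would therefore spend the bulk of the write-up making the comparison with~\cite{RamShepler} precise and treat conditions~1 and~3 in a sentence each.
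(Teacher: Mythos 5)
Your proposal is correct and follows essentially the same route as the paper: the paper likewise derives Proposition~\ref{overC} from Proposition~\ref{PBW} (equivalently, Lemma~\ref{lemma:J}(2) plus the invariance condition of Theorem~\ref{thm:4condns}(i)) by comparison with~\cite[Lemma~1.5, equations~(1.6) and~(1.7), and Theorem~1.9]{RamShepler}, with conditions~1 and~3 carrying over directly and the hypothesis $\mathrm{char}(k)\nmid |G|$ entering exactly where you place it, to license the Ram--Shepler classification underlying conditions~2a and~2b.
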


We further interpret the restrictive 
Condition~(3) in Proposition~\ref{overC}: 
Fix $g$ in $G$ with $\codim V^g=2$. Choose vectors 
$v_1, v_2$ spanning $(V^g)^\perp$ and $v_3, \ldots, v_n$ spanning $V^g$.
This condition then tells us that after expanding with respect to 
the basis $v_1, \ldots, v_n$, the
coefficient of $v_2$ in $[v_2, v_i]_\g$ is equal to 
the coefficient of $v_1$ in $[v_i, v_1]_\g$
for all $i\geq 3$. 

\begin{remark} 
{\em We view Lie orbifold algebras as generalizations both of 
symplectic reflection algebras and of universal enveloping algebras
with group actions. Indeed,
when $\cH_{}$ is a Lie orbifold algebra, 
we can replace each function $a_g$ with the zero function and 
recover the skew group algebra $\mathcal{U}\# G$
for $G$ acting as automorphisms on
$\mathcal{U}$, the universal enveloping algebra of a 
finite dimensional Lie algebra.
Alternatively,
we can replace the linear parameter by zero,
i.e., replace $[\cdot, \cdot]_{\g}$ by the zero bracket, 
and recover a Drinfeld Hecke algebra (a symplectic reflection algebra
in the special case that $G$ acts symplectically).
Thus, Lie orbifold algebras also include Drinfeld Hecke algebras 
(and Lusztig's graded affine Hecke algebra, in particular)
as special cases.
We illustrate by giving details for the example mentioned in
the introduction. 
}
\end{remark}

\begin{example}\label{example:sl2}
{\em Let $\mathfrak{g} = \mathfrak{sl}_2$ over $\CC$ with
basis $e$, $f$, $h$ and Lie bracket defined by 
$$
   [e,f]=h, \ \ \ [h,e]=2e, \ \ \ [h,f]=-2f.
$$
Let $G$ be a cyclic group of order 2 generated by $g$ acting on
$\mathfrak{sl}_2$ by 
$$
   {}^ge=f, \ \ \ {}^gf=e, \ \ \ {}^gh=-h.
$$
The bracket is $G$-invariant under this action.
Let  $a_g$ be the skew-symmetric form
on $V={\mathfrak{sl}}_2$ defined by
$$
  a_g(e,h)=1, \ \ \ a_g(h,f)=1, \ \ \ a_g(f,e)=0.
$$
This function is $G$-invariant, and $\ker a_g=V^g$ is the linear
span of $e+f$, which has codimension 2 in $V$. Furthermore, $a_g$ is compatible
with the Lie bracket, that is, 
Condition~(3) of Lemma~\ref{lemma:J} holds.
(It suffices to check this condition for 
$v_1=e$, $v_2=f$, $v_3=h$:  
$$
\begin{aligned}
a_g(h, [e,f]) + a_g (e, [f,h]) + a_g (f, [h,e])
 & = 
0\ .)
\end{aligned}
$$
Set $a_1$ equal to the zero function.
Then
$T(V)\# G$ modulo the ideal generated by
$$
 eh-he +2e - g, \ \ 
 hf-fh +2f -g, \ \ 
 ef-fe -h 
$$
is a Lie orbifold algebra.

In fact, Theorem~\ref{thm:4condns}  shows there are only
two parameters' worth of 
Lie orbifold algebras capturing this action of $G$
on ${\mathfrak{sl}}_2$:
Every such Lie orbifold algebra has the form
$$
T(V)\# G
/
(
 eh-he +2e- t_2g+t_1, \
 hf-fh +2f -t_2g+t_1, \  
 ef-fe -h  )
 $$
for some scalars $t_1,t_2$ in $\CC$.
(Note that $t_1=t_2=0$ defines the universal
enveloping algebra extended by $G$.)
}
\end{example}
\section{Koszul Resolution}\label{sec:koszul}
Hochschild cohomology catalogues and illuminates deformations of an 
algebra.  Indeed, every deformation of a $k$-algebra $R$ 
corresponds to an element in degree~2 Hochschild cohomology, $\HH^2(R)$.
Isomorphic deformations define cohomologous
cocycles.  We isolated in~\cite{SheplerWitherspoon1} 
the cocycles that define Drinfeld
Hecke algebras (i.e., graded Hecke algebras): 
Drinfeld Hecke algebras are precisely
those deformations of $S\# G$ whose 
corresponding Hochschild 2-cocycles are ``constant''.  
In the next section, we explain this statement, and we 
more generally express conditions for 
a quotient of $T(V)\# G$
to define a Drinfeld orbifold algebra in terms of 
the Hochschild cohomology of $S\# G$ and its graded Lie structure.
Here we establish preliminaries and notation.
{\em From now on, we assume that the characteristic of $k$
does not divide the order of $G$ and that $k$ contains 
the eigenvalues
of the actions of elements of $G$ on $V$.}
(For example, take $k$ algebraically closed of characteristic coprime
to $|G|$.)

Recall that we denote the image of $v$ in $V$ under the action of 
any $g$ in $G$ by ${}^gv$.  
Write $V^*$ for the contragredient (or dual) representation.
Given any basis $v_1,\ldots,v_n$ of $V$, let
$v_1^*,\ldots,v_n^*$ denote the dual basis of $V^*$.
Given any set $A$ carrying an action of $G$,
we write $A^G$ for the subset of elements invariant under the action.
Again, we write $V^g$ for the $g$-invariant subspace of $V$.
Since $G$ is finite, we may assume $G$ acts by isometries on $V$ (i.e.,
$G$ preserves a Hermitian form on $V$).

The Hochschild cohomology 
$\HHD(S\#G)$ is the space $\Ext^{\DOT}_{(S\# G)^e}(S\# G, S\# G)$, 
where $(S\# G)^e = (S\# G) \ot (S\# G)^{op}$ acts on $S\# G$ by multiplication,
one tensor factor acting on the left and the other 
tensor factor acting on the right. 
We also examine the Hochschild cohomology $\HH^{\DOT}(S, S\# G):= \Ext^{\DOT}_{S^e}
(S, S\# G)$ where $S^e = S\ot S^{op}$ 
and, more generally, $\HH^{\DOT}(S,M):= \Ext^{\DOT}_{S^e}(S,M)$
for any $S^e$-module $M$.

Let $\mathcal C$ be  a set of representatives of the conjugacy classes of $G$. 
For any $g$ in $G$, let $Z(g)$ be the centralizer of $g$.
Since we have assumed that 
the characteristic of $k$ does not divide the order of $G$,
there is a $G$-action giving the first  of the following
isomorphisms of graded vector spaces (see, for example, 
\c{S}tefan~\cite[Cor.~3.4]{Stefan}):
\begin{equation}\label{isos}
\begin{aligned}
  \HHD(S\# G) \ \ 
&\cong \ \ 
\HHD(S, S\#G)^G\\
\ \ &\cong \ \ 
 \Biggl(\,\bigoplus_{g\in G} \HHD(S, S g)
\Biggr)^{G}  
\ \ \cong \ \ 
\bigoplus_{g\in{\mathcal C}}\HHD(S,S g)^{Z(g)}.
\end{aligned}
\end{equation}
The first line is in fact a graded algebra isomorphism; 
it follows from applying a spectral sequence.
The second isomorphism results from decomposing 
the bimodule $S\# G$ into the direct sum of components $S\, g$.
The action of $G$ permutes these components via the conjugation action of $G$
on itself, and thus the third isomorphism is a canonical projection onto a set of 
representative summands.  
Each space $\HHD(S,S g) = \text{Ext}^{\DOT}_{S^e}(S, S g)$ may be determined
explicitly using the Koszul resolution of $S$
(a free resolution of $S$ as an $S^e$-module) that we recall next.

The {\bf Koszul resolution} $K_{\DOT}(S)$ is defined by 
$K_0(S)=S^e$ and
\begin{equation}\label{koszul-res2}
  K_{p}(S) = S^e \ot \Wedge^{p}(V)
\end{equation}
for $p\geq 1$, with differentials
\begin{equation}\label{koszul-diff}
d_p(1\ot 1\ot v_{j_1}\wedge\cdots\wedge v_{j_p}) = 
   \sum_{i=1}^p (-1)^{i+1} (v_{j_i}\ot 1 - 1\ot v_{j_i})\ot
   (v_{j_1}\wedge\cdots\wedge \hat{v}_{j_i}\wedge\cdots\wedge v_{j_p})
\end{equation}
for all $v_{j_1},\ldots, v_{j_p}\in V$
(e.g., see Weibel~\cite[\S4.5]{Weibel}).
We apply $\Hom_{S^e}(-, S\, g)$ to
each term of the Koszul resolution 
and then identify
$$
\Hom_{S^e}(S^e\ot\Wedge^{p}(V), S g)
\cong
\Hom_{k}(\Wedge^pV,S g)
\cong
S g\ot\Wedge^p V^*$$ 
for each $g$ in $G$.
Thus we write the set of {\bf cochains} arising from the Koszul resolution
(from which the cohomology classes emerge)
as vector forms on $V$ 
tagged by group elements:
Let 
\begin{equation}\label{C-cochains}
\CD=\bigoplus_{g\in G} C_g^{\hspace{.3ex}\DOT},
\quad\text{where}\quad 
C_g^p :=  S g \otimes \Wedge^{p} V^*
\quad  \  \text{ for each }\ g\in G.
\end{equation}
We call $\CD_g$ the space of cochains 
{\bf supported on} $g$.
Similarly, for any subset $X$ of $G$, we
define $\CD_X := \displaystyle{\oplus_{g\in X}}\CD_g$, 
the set of cochains
{\bf supported on} $X$. 
We say a cochain in $\CD$ is {\bf supported off}
a subset $X$ of $G$
if it lies in $\displaystyle{\oplus_{g\notin X}} \CD_g$.
Note that each element of $G$ permutes the summands 
of $C^{\hspace{.3ex}\DOT}$ via the conjugation action of $G$ on itself.

From the space $\CD$ of cochains, we define
a space of representatives of cohomology classes: Let
\begin{equation}\label{H-DOT}
\HD:=
\bigoplus_{g\in G}\ \ S(V^g) g \otimes \Wedge^{\DOT-\codim V^g}{(V^g)}^*\otimes
\Wedge^{\codim V^g}((V^g)^\perp)^* \ .
\end{equation}
Then $\HD\subset \CD$ with
$$
\HD\cong \HHD(S,S\#G)
\ \text{ and } \ 
(\HD)^G\cong \HHD(S,S\#G)^G\cong \HHD(S\#G) .
$$ (See~\cite[Proposition~5.11 and~(6.1)]{bracket} for this formulation of
the Hochschild cohomology. It was computed first independently by
Farinati~\cite{Farinati} and by Ginzburg and Kaledin~\cite{GinzburgKaledin}.)  
In particular it follows that $(H^2)^G$ is supported on elements $g$
for which $\codim V^g\in \{0,2\}$, since an element of $(H^2)^G$ is invariant
under the action of each group element $g$. 
See~\cite[Lemma~3.6]{SheplerWitherspoon1} for details. 

The grading on the polynomial ring $S=S(V)$ 
induces a grading on the set of cochains by
polynomial degree:
We say a cochain in $\CD$ has
{\bf polynomial degree}~$i$ if
the factors in $S$ in the expression (\ref{C-cochains}) are
all polynomials of degree $i$.  We say a cochain is 
{\bf homogeneous}
when its polynomial factors in $S$ are homogeneous.
A {\bf constant cochain} is then
one of polynomial degree $0$ 
and a {\bf linear cochain} is one of homogeneous polynomial degree $1$.
The cochains $\CD$ are filtered by polynomial degree:
$\CD_0\subset \CD_1\subset \CD_2\subset \cdots$,
where $\CD_i$ is the subspace of $\CD$ consisting of 
cochains of polynomial degree at most $i$.

\begin{definition}\label{bracketdef}
We define
a {\bf cochain bracket map}
on the subspace generated by 
linear and constant 2-cochains:
Let  $[*,*] :C_1^2\times C_1^2\rightarrow C_1^3$ 
be the symmetric map defined by $\hphantom{x}[\alpha,\beta](v_1,v_2, v_3)$
$$
:= \left\{
\begin{aligned}
& 
{\displaystyle \sum_{\substack{g,h\in G\\ \rule{0ex}{1.5ex}\sigma\in\Alt_3}}}\,
 \Big[
 \alpha_{gh^{-1}}\,
\left(\beta_h(v_{\sigma(1)}\wedge v_{\sigma(2)})\wedge v_{\sigma(3)}\right) 
+&
 \beta_{gh^{-1}}\,
\left(\alpha_h(v_{\sigma(1)}\wedge v_{\sigma(2)})\wedge v_{\sigma(3)}\right) 
\Big] g \\
& & \text{ for linear } \alpha,\beta
\\
& 
{\displaystyle \sum_{\substack{g,h\in G\\ \rule{0ex}{1.5ex}\sigma\in\Alt_3}}}\,
 \alpha_{gh^{-1}}\,
\left(\beta_h(v_{\sigma(1)}\wedge v_{\sigma(2)})\wedge v_{\sigma(3)}\right)
\ g 
& \text{ for constant } \alpha \text{ and linear } \beta  \\
& \rule{0ex}{4ex}
\quad0 & \quad\quad\text{ for constant } \alpha\text{ and }\beta
\end{aligned}
\right.
$$
for all $v_1, v_2, v_3$ in $V$. 
\end{definition}

We will see in the next section that this definition gives a representative
cochain for a class in cohomology $\HH^{\DOT}(S\#G)$ 
of the Gerstenhaber bracket of $\alpha$ and $\beta$ when they are cocycles.
\section{Gerstenhaber Bracket}\label{section:GB}

In this section we recall the definition of the Gerstenhaber bracket
on Hochschild cohomology, defined on the bar resolution, and show
how it is related to the cochain bracket map of Definition~\ref{bracketdef}.
Recall the definition of the 
{\bf bar resolution} of a $k$-algebra $R$:
It  has $p$th term $R^{\ot (p+2)}$ and 
differentials
$$
  \delta_p (r_0\ot\cdots\ot r_{p+1}) = \sum_{i=0}^p (-1)^i r_0\ot\cdots
     \ot r_ir_{i+1}\ot \cdots\ot r_{p+1}
$$
for all $r_0,\ldots, r_{p+1} \in R$.
From this one may derive the standard definition of a 
{\bf Hochschild 2-cocycle}: It is 
an element $\mu$ of $\Hom_k(R\ot R, R)\cong \Hom_{R^e}(R^{\ot 4},R)$ 
for which 
\begin{equation}\label{hoch2}
\mu(rs,u) + \mu(r,s)u = \mu(r,su) + r\mu(s,u) 
\end{equation}
for all $r,s,u\in R$. 
(Here we have further identified the linear map $\mu$ on $R\otimes R$
with a bilinear map on $R\times R$.)

We will need the following lemma for our calculations. 

\begin{lemma}\label{move-g}
Let $\mu$ be a Hochschild 2-cocycle on $S\# G$ whose kernel
contains $kG$. Then
$$
  \mu(rg,s) = \mu(r,gs) = \mu(r, {}^gs) g
$$
for all $r,s$ in $S$ and $g$ in $G$. 
\end{lemma}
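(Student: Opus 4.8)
```latex
The plan is to prove the two claimed equalities in turn, using the
cocycle identity~\eqref{hoch2} together with the hypothesis that $kG$
lies in the kernel of $\mu$. The key point is that elements of $G$ are
invertible in $S\# G$, so I can feed a group element and its inverse into
the cocycle identity at the right slots and let the ``kernel contains
$kG$'' hypothesis annihilate the unwanted terms.

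First I would establish the middle equality $\mu(rg,s)=\mu(r,gs)$. Apply
the $2$-cocycle identity~\eqref{hoch2} with the substitution
$(r,s,u)\mapsto (r,g,s)$, reading the three arguments of $\mu$ off the
triple. This gives
$$
  \mu(rg,s) + \mu(r,g)s = \mu(r,gs) + r\,\mu(g,s).
$$
Since $g\in kG$ and $kG\subseteq\ker\mu$, both $\mu(r,g)$ and $\mu(g,s)$
vanish, leaving exactly $\mu(rg,s)=\mu(r,gs)$. This is the easy half and
uses nothing beyond one application of~\eqref{hoch2}.

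Next I would prove $\mu(r,gs)=\mu(r,{}^gs)\,g$. The idea is to move the
group element $g$ from the left of $s$ to the right, at the cost of
twisting $s$ by the $G$-action; this is just the defining
multiplication rule $g\,s = {}^g s\,g$ in $S\#G$. Concretely, in $S\#G$
one has $gs = {}^gs\,g$, so as elements $gs$ and ${}^gs\,g$ coincide and
hence $\mu(r,gs)=\mu(r,{}^gs\,g)$. It then remains to peel the trailing
$g$ out of the second argument. For this I would apply~\eqref{hoch2}
with $(r,s,u)\mapsto(r,{}^gs,g)$, obtaining
$$
  \mu(r\cdot{}^gs,\,g) + \mu(r,{}^gs)\,g
     = \mu(r,{}^gs\,g) + r\,\mu({}^gs,g),
$$
and I would argue that $\mu({}^gs,g)=0$ since $g\in kG\subseteq\ker\mu$,
and likewise $\mu(r\cdot{}^gs,g)=0$ for the same reason, so that the
identity collapses to $\mu(r,{}^gs)\,g=\mu(r,{}^gs\,g)=\mu(r,gs)$.
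Combining this with the first equality yields the full chain
$\mu(rg,s)=\mu(r,gs)=\mu(r,{}^gs)\,g$.

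The main obstacle, such as it is, lies in being careful about which
slots are genuinely hit by the kernel hypothesis: the hypothesis kills
$\mu$ whenever a pure group-algebra element sits in \emph{either}
argument, so I must check at each step that the terms I discard really
do have a factor from $kG$ in one slot (e.g.\ that $\mu(g,s)$,
$\mu(r,g)$, $\mu({}^gs,g)$, and $\mu(r\cdot{}^gs,g)$ all vanish for this
reason, noting $r\cdot{}^gs\in S$ but the second slot $g\in kG$). No
deep machinery is needed; the proof is a direct bookkeeping exercise
with the cocycle condition, and the only place to slip is mis-tracking
the twist ${}^g(-)$ when commuting $g$ past $s$.
```
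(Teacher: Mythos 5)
Your proof is correct and follows essentially the same route as the paper: one application of the cocycle identity to the triple $(r,g,s)$ to get $\mu(rg,s)=\mu(r,gs)$, then a second application to $(r,{}^gs,g)$ combined with the relation $gs={}^gs\,g$ in $S\#G$, with the kernel hypothesis killing exactly the terms $\mu(r,g)$, $\mu(g,s)$, $\mu(r\,{}^gs,g)$, and $\mu({}^gs,g)$. Your care in checking that each discarded term has a pure $kG$ element in one slot matches the paper's use of the hypothesis precisely.
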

\begin{proof}
Apply (\ref{hoch2}) to $r,g,s$ to obtain $\mu(rg,s)+\mu(r,g)s=
\mu(r,gs)+ r\mu(g,s)$. By hypothesis, $\mu(r,g)=0=\mu(g,s)$, so
$\mu(rg,s)=\mu(r,gs)$.
Now apply (\ref{hoch2}) to $r$, ${}^gs$, $g$ to obtain 
$\mu(r ({}^gs),g) + \mu(r, {}^gs)g = \mu(r, ({}^gs) g) + r\mu({}^gs, g)$.
By hypothesis, $\mu(r ({}^gs), g)=0=\mu( {}^gs , g)$,
so $\mu(r, {}^gs)g=\mu(r, ({}^gs) g)$.
Since $gs= ({}^gs) g$, the lemma follows.
\end{proof}
We will also need the definition of the circle operation 
on Hochschild cohomology in degree 2: If $R$ is a $k$-algebra and 
$\alpha$ and $\beta$ are elements of  $\Hom_{R^e}(R^{\ot 4}, R)
\cong \Hom_k (R^{\ot 2},R)$, then $\alpha\circ \beta\in\Hom_k(R^{\ot 3},R)$ is
defined by 
$$
   \alpha\circ\beta (r_1\ot r_2\ot r_3): = 
\alpha\big(\beta(r_1\ot r_2)\ot r_3\big)
   -\alpha\big(r_1\ot \beta(r_2\ot r_3)\big)
$$
for all $r_1,r_2,r_3\in R$.  The {\bf Gerstenhaber bracket} is then
$$
   [\alpha,\beta]:=
\alpha\circ\beta + \beta\circ\alpha.
$$
This bracket is well-defined on cohomology classes, however the circle
operation is not. 
In our setting, $R=S\# G$, and we now express the Gerstenhaber bracket on
input from the Koszul resolution using the cochain
bracket of Definition~\ref{bracketdef}.
In the theorem below, we fix
a choice of isomorphism $\HHD(S\# G)\cong (H^{\DOT})^G$ 
where $H^{\DOT}$ is given by (\ref{H-DOT}). 
(See~\cite[Proposition 5.11 and (6.1)]{bracket}.)

\begin{thm}\label{prop:2brackets}
Consider two cohomology classes $\alpha', \beta'$
in $\HH^2(S\# G)$ represented by cochains
$\alpha, \beta$ in $(H^2)^G$
of polynomial degree at most 1. 
Then the Gerstenhaber bracket in $\HH^3(S\#G)$
of $\alpha'$ and $\beta'$
is represented by the cochain bracket
$[\alpha,\beta]$ of Definition~\ref{bracketdef}.
\end{thm}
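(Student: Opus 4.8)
The plan is to compute the Gerstenhaber bracket $[\alpha',\beta']$ by transporting the cochains $\alpha,\beta$ from the Koszul resolution (where they live as vector forms tagged by group elements) to the bar resolution of $S\# G$ (where the circle operation and Gerstenhaber bracket are defined), applying the circle operation there, and then transporting the result back to the Koszul picture, where I expect to recognize the cochain bracket of Definition~\ref{bracketdef}. Concretely, I would fix a chain map $\phi_{\DOT}$ from the Koszul resolution of $S$ into the bar resolution of $S\#G$ (the kind alluded to in Remark~\ref{remark:kappa-mu}, with $\phi_2$ the relevant component in degree~2 and $\phi_3$ in degree~3), together with a chain map $\psi_{\DOT}$ going the other way. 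Writing $\widetilde\alpha=\alpha\circ\phi_2$ and $\widetilde\beta=\beta\circ\phi_2$ for the bar-resolution cochains representing the same classes, the Gerstenhaber bracket is computed on the bar side as $\widetilde\alpha\circ\widetilde\beta+\widetilde\beta\circ\widetilde\alpha$, and I would then evaluate this composite against $\phi_3$ and simplify.

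First I would record the structure of $\phi_2$ explicitly: it sends a Koszul $2$-chain $v\wedge w$ to (a combination built from) $v\ot w-w\ot v$ in the bar complex, so that a linear or constant cochain $\alpha$ supported on $g$, viewed through $\phi_2$, becomes a bar cocycle $\mu$ with $kG$ in its kernel and $2\alpha=\kappa^L$ (respectively $2\beta=\kappa^C$) as in the proof of Theorem~\ref{thm:4condns} via the theory of Koszul rings. Lemma~\ref{move-g} is exactly the tool that lets me shift group elements past the cochain inputs, i.e.\ $\mu(rg,s)=\mu(r,{}^gs)\,g$, which is what produces the conjugated index $gh^{-1}$ and the inner action ${}^h(-)$ appearing in Definition~\ref{bracketdef}. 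The key computational step is to expand $\widetilde\alpha\circ\widetilde\beta$ on a Koszul $3$-chain $v_1\wedge v_2\wedge v_3$: the circle operation inserts $\beta$ into one slot of $\alpha$, and after using Lemma~\ref{move-g} to move group elements to the right and collecting the alternating sum over $\Alt_3$ (which arises because $\phi_3$ antisymmetrizes the three vector inputs), I should obtain precisely the summand $\alpha_{gh^{-1}}(\beta_h(v_{\sigma(1)}\wedge v_{\sigma(2)})\wedge v_{\sigma(3)})$ summed over $g,h,\sigma$. Symmetrizing by adding $\widetilde\beta\circ\widetilde\alpha$ recovers both terms of the linear–linear case of the bracket, while the constant–linear and constant–constant cases follow because a constant cochain inserted into the linear slot either contributes a single term or vanishes for degree reasons.

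The main obstacle will be bookkeeping the chain maps carefully enough that the combinatorial identity comes out with the correct indices and, crucially, the correct factor: the bracket map has no stray factor of $2$, whereas the passage between Koszul and bar resolutions introduces factors of $2$ through the identity $2\alpha=\kappa^L$ and the antisymmetrization in $\phi_2$ and $\phi_3$. I would therefore track these factors explicitly (the paper has already flagged that \cite{HOT} dropped a factor of $2$, so this is exactly the delicate point), verifying that the normalization of $\phi_2,\phi_3$ makes them cancel against the $1/2$ implicit in $\alpha=\tfrac12\kappa^L$. A secondary subtlety is that the circle operation is \emph{not} well-defined on cohomology, only the bracket is; so I must be sure to compute the full symmetric combination $\alpha\circ\beta+\beta\circ\alpha$ and check that coboundary ambiguities in the choice of $\phi_{\DOT}$ contribute only exact terms, leaving a well-defined class represented by $[\alpha,\beta]$. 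Once the factors and indices are pinned down, matching the resulting expression termwise against the three cases of Definition~\ref{bracketdef} is routine.
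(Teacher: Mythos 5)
Your high-level route is in fact the paper's own: transport $\alpha,\beta$ to the bar resolution of $S\#G$ via a chain map $\psi_{\DOT}$, compute the circle products there, pull the result back along $\phi_3$, and use Lemma~\ref{move-g} together with the antisymmetrization in $\phi_{\DOT}$ to organize everything as a sum over $\Alt_3$. But there is a genuine gap at the decisive step. After Lemma~\ref{move-g} moves group elements to the right, the circle product produces summands of the form $\psi^*(\alpha)\bigl(\beta_h(v_{\sigma(1)}\wedge v_{\sigma(2)})\ot {}^h v_{\sigma(3)}\bigr)\, h$, in which $h$ still acts on the third vector. Definition~\ref{bracketdef} contains no such action: its summands are $\alpha_{gh^{-1}}\bigl(\beta_h(v_{\sigma(1)}\wedge v_{\sigma(2)})\wedge v_{\sigma(3)}\bigr)$ with a plain $v_{\sigma(3)}$. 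Your proposal asserts that Lemma~\ref{move-g} is ``what produces the conjugated index $gh^{-1}$ and the inner action ${}^h(-)$ appearing in Definition~\ref{bracketdef}'' --- but no inner action appears in that definition, so you have misread the target formula, and the computation you describe would terminate at an expression that does not match it. The missing idea is Equation~(\ref{h-noth}) of the paper: for $\beta$ of polynomial degree $1$ lying in $H^2$, one has $\sum_{\sigma\in\Alt_3}\beta_h(v_{\sigma(1)}\wedge v_{\sigma(2)})\ot {}^h v_{\sigma(3)}=\sum_{\sigma\in\Alt_3}\beta_h(v_{\sigma(1)}\wedge v_{\sigma(2)})\ot v_{\sigma(3)}$, which lets the ${}^h$ be erased before combining terms into a wedge. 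This is not formal bookkeeping and is false for arbitrary cocycles; it is precisely where the hypothesis $\alpha,\beta\in (H^2)^G$ enters, since by~(\ref{H-DOT}) a nonzero $\beta_h$ with $h$ acting nontrivially is supported on $\Wedge^2(V^h)^\perp$ with $\codim V^h=2$, and the identity is then verified on an eigenbasis for $h$. Without invoking this structural fact about the chosen representatives, your argument cannot close.

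Two smaller corrections. First, the bar cochain representing $\alpha$ is $\psi^*(\alpha)=\alpha\circ\psi_2$, not $\alpha\circ\phi_2$; the latter does not even typecheck, since $\phi_2$ maps into the bar resolution. Second, your factor-of-$2$ worry is misplaced: the identity $2\alpha(r)=\kappa^L(r)$ belongs to the Braverman--Gaitsgory proof of Theorem~\ref{thm:4condns}, where $\alpha$ there denotes a map on the relation space $R$, and it plays no role in the present theorem (no $\kappa$ appears here at all). With the normalization $\psi\phi=\id$, one has $\psi^*(\beta)(v\ot w-w\ot v)=\beta(v\wedge w)$ exactly, and no factor of $2$ arises anywhere in this computation; the factor of $2$ the paper flags against~\cite{HOT} occurs in the PBW condition $[\kappa^L,\kappa^L]=2d^*\kappa^C$ of Lemma~\ref{prop:3condns}, not in the bracket formula proved here. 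Your final concern --- that the circle operation is not well defined on cohomology --- is legitimate, and the paper addresses it by fixing one $\psi$ with $\psi\phi=\id$ and $\psi_2$ vanishing on inputs with a tensor factor in $k$, and by observing that $\psi_2$ preserves the $G$-action on the image of $\phi_2$, so that no averaging over $G$ is needed.
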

\begin{proof}
We use the chain map $\phi_{\DOT}$
from the Koszul resolution
$K_{\DOT}(S)$ to the bar resolution for $S=S(V)$ given in 
each degree by
\begin{equation}\label{eqn:phip}
  \phi_p(1\ot 1\ot v_{j_1}\wedge\cdots\wedge v_{j_p})
   = \sum_{\sigma\in\Sym_p} \sgn(\sigma) \ot v_{j_{\sigma(1)}}\ot
   \cdots\ot v_{j_{\sigma(p)}}\ot 1
\end{equation}
for all $v_{j_1},\ldots,v_{j_p}\in V$, where
$\Sym_p$ denotes the symmetric group on $p$ symbols. 
We may view functions on the bar resolution
in cohomological degree 2 as functions on $K_2(S)=S^e\ot
\Wedge^2(V)$ simply by composing with $\phi_2$.

We will also need a choice $\psi_{\DOT}$ of chain map from the
bar to the Koszul resolution.
The particular choice of $\psi_{\DOT}$ does not matter here, but
we will assume that $\psi\phi$ is the identity map
and that $\psi_2(1\ot a\ot b\ot 1) =0$ if either $a$ or $b$ is in 
the field $k$.
(For example, one could take $\psi_{\DOT}$ so that
$\psi_2(1\ot v_i\ot v_j\ot 1) = 
       1\ot 1\ot v_i\wedge v_j$
for $i<j$ and $0$ otherwise, for some fixed basis
$v_1,\ldots,v_n$ of $V$.
See~\cite{chainmap} for explicit constructions of such
maps $\psi_{\DOT}$; we will not need them here.) 
Note that although
$\psi_2$ may not be a $kG$-homomorphism, 
the map $\psi_2$ preserves the action of $G$
on the image of $\phi_2$.
For our purposes here, this implies that we do not need to average over $G$
when computing brackets, as is done, e.g., in~\cite{bracket}. 
We also note that every chain map $\psi$ for which $\psi\phi$ is the
identity map has the property that 
$$\psi_2(1\ot v\ot w\ot 1 
-1\ot w\ot v\ot 1) = 1\ot 1\ot v\wedge w$$ for all $v,w$ in $V$.
Thus on elements of this form, $\psi_2$ is independent of choice
of basis of $V$. 

We extend each function $\gamma$ on the bar complex for $S$ to a function
on the bar complex for $S\# G$ in a standard way: In degree 2, 
we require $kG$ to be in the kernel of $\gamma$ and set 
$$\gamma(1\ot s_1 g_1\ot s_2g_2\ot 1) := \gamma(1\ot s_1\ot {}^{g_1}\! s_2\ot 1) g_1g_2$$
for $\gamma\in \Hom_{S^e}(S^{\ot 4}, S\# G)$, $s_1,s_2\in S$, $g_1,g_2\in G$. 
Compare with Lemma~\ref{move-g}. 
(See~\cite[Theorem~5.4]{CGW} for a more general statement.) 

We apply the chain map $\psi$
to convert $\alpha$ and $\beta$ to functions on the bar complex,
execute the Gerstenhaber bracket, 
and apply $\phi$ to convert back to a function on the Koszul complex.
The induced operation  on cochains arising from
the Koszul complex is thus
$$
[\alpha , \beta]
:=\phi^*(\psi^*(\alpha)\circ\psi^*(\beta))
 + \phi^*(\psi^*(\beta)\circ \psi^*(\alpha))
\ .
$$
Note there is no guarantee that $[\alpha,\beta]$ is in the chosen space
$H^3$ of representatives of cohomology classes, however there is a 
unique element of $H^3$ to which it is cohomologous. 

We compute separately the two corresponding circle operations,
keeping in mind that they are not well-defined on cohomology,
and so must be combined. (Again, we identify
$\text{Hom}_{S^e}(S^{\otimes (p+2)}, *)$ with $\text{Hom}_{k}(S^{\otimes p}, *)$
and $\text{Hom}_{S^e}(S^e\otimes \Wedge^pV, *)$
with $\text{Hom}_{k}(\Wedge^pV,*)$, dropping
extra tensor factors of $1$.)
Then $$
\begin{aligned}
  (\alpha\circ\beta)&(v_1\wedge v_2\wedge v_3)\\
&=  \big(\psi^*(\alpha)\circ \psi^*(\beta)\big) 
\phi(v_1\wedge v_2\wedge v_3)\\ 
&=\big(\psi^*(\alpha)\circ \psi^*(\beta)\big) 
\sum_{\sigma \in S_3} \text{sgn}(\sigma) \
v_{\sigma(1)}\ot v_{\sigma(2)}\ot v_{\sigma(3)}\\
&=
\sum_{\sigma \in S_3} \text{sgn}(\sigma) \
\psi^*(\alpha)
\Big(\psi^*(\beta)(v_{\sigma(1)}\ot v_{\sigma(2)})\ot v_{\sigma(3)}
-
v_{\sigma(1)}\ot \psi^*(\beta)(v_{\sigma(2)}\ot v_{\sigma(3)})\Big).
\end{aligned}
$$
We may rewrite the sum over the alternating
group instead to obtain
$$
\begin{aligned}
  (\alpha\circ\beta)(v_1\wedge v_2\wedge v_3)\\
=
\sum_{\sigma \in \text{Alt}_3} \
& \psi^*(\alpha)
\Big(\psi^*(\beta)
(v_{\sigma(1)}\ot v_{\sigma(2)})\ot v_{\sigma(3)}
-
\psi^*(\beta)(v_{\sigma(2)}\ot v_{\sigma(1)})\ot v_{\sigma(3)}\Big)\\
-& 
\psi^*(\alpha)
\Big(v_{\sigma(1)}\ot \psi^*(\beta)(v_{\sigma(2)}\ot v_{\sigma(3)})
-
v_{\sigma(1)}\ot \psi^*(\beta)(v_{\sigma(3)}\ot v_{\sigma(2)})\Big).
\end{aligned}
$$
But $\psi^*(\beta)(v\tensor w - w\tensor v) = \beta(v\wedge w)$
for all vectors $v,w$ in $V$,
and hence Lemma~\ref{move-g} implies that the above sum is just
\begin{equation}\label{yuck}
\begin{aligned}
\sum_{\sigma \in \text{Alt}_3} \ \ \ \ 
&\psi^*(\alpha)
\Big(
\beta(v_{\sigma(1)}\wedge v_{\sigma(2)})\ot v_{\sigma(3)}
-
v_{\sigma(1)}\ot \beta(v_{\sigma(2)}\wedge v_{\sigma(3)})\Big)\\
=
\sum_{\sigma \in \text{Alt}_3,\ h \in G} \
&\psi^*(\alpha)
\Big(
\beta_h(v_{\sigma(1)}\wedge v_{\sigma(2)})h\ot v_{\sigma(3)}
-
v_{\sigma(1)}\ot \beta_h(v_{\sigma(2)}\wedge v_{\sigma(3)})h\Big)\\
=
\sum_{\sigma \in \text{Alt}_3,\ h \in G} \
&\psi^*(\alpha)
\Big(
\beta_h(v_{\sigma(1)}\wedge v_{\sigma(2)})\ot {}^h v_{\sigma(3)}
-
v_{\sigma(1)}\ot \beta_h(v_{\sigma(2)}\wedge v_{\sigma(3)})\Big)\ h\ .
\end{aligned}
\end{equation}

First assume the polynomial degree of
$\beta$ is $0$.
Then each $\beta_h(v_i\wedge v_j)$ is constant.
But
$\psi^*(\alpha)\big(a\ot b\big)$
is zero for either $a$ or $b$ in $k$,
and the last expression is thus zero.
Hence,
$\alpha\circ\beta(v_1\wedge v_2\wedge v_3)$
is zero for $\beta$ of polynomial degree 0.

Now assume $\beta$ is homogenous of polynomial degree $1$.
We claim that for
any $h$ in $G$ and any $u_1, u_2, u_3$ in $V$,
\begin{equation}\label{h-noth}
\sum_{\sigma\in \text{Alt}_3}
\beta_h(u_{\sigma(1)}\wedge u_{\sigma(2)})\ot {}^h u_{\sigma(3)}
=
\sum_{\sigma\in \text{Alt}_3}
\beta_h(u_{\sigma(1)}\wedge u_{\sigma(2)})\ot u_{\sigma(3)}.
\end{equation}
The equation clearly holds for $h$ acting trivially on $V$.
One may easily verify the equation for $h$ not in the kernel of
the representation $G\rightarrow\text{GL}(V)$ by fixing a basis
of $V$ consisting of eigenvectors for $h$ and using the fact that
any nonzero $\beta_h$ is supported on $\bigwedge^2(V^h)^\perp$
with $\codim V^h=2$; see~(\ref{H-DOT}).

We use Equation~\ref{h-noth} and the fact that
$\psi^*(\alpha)(v\tensor w - w\tensor v) = \alpha(v\wedge w)$
for all vectors $v,w$ in $V$ to simplify Equation~\ref{yuck}:
$$
\begin{aligned}
(\alpha\circ\beta)&(v_1\wedge v_2\wedge v_3)\\
& =
\sum_{\sigma \in \text{Alt}_3,\ h \in G} \
\psi^*(\alpha)
\Big(
\beta_h(v_{\sigma(1)}\wedge v_{\sigma(2)})\ot {}^h v_{\sigma(3)}
-
v_{\sigma(1)}\ot \beta_h(v_{\sigma(2)}\wedge v_{\sigma(3)})\Big)\ h\ \\
&=
\sum_{\sigma \in \text{Alt}_3,\ h \in G} \
\psi^*(\alpha)
\Big(
\beta_h(v_{\sigma(1)}\wedge v_{\sigma(2)})\ot v_{\sigma(3)}
-
v_{\sigma(1)}\ot \beta_h(v_{\sigma(2)}\wedge v_{\sigma(3)})\Big)\ h\ \\
&=
\sum_{\sigma \in \text{Alt}_3,\ h \in G} \
\psi^*(\alpha)
\Big(
\beta_h(v_{\sigma(1)}\wedge v_{\sigma(2)})\ot v_{\sigma(3)}
-
v_{\sigma(3)}\ot \beta_h(v_{\sigma(1)}\wedge v_{\sigma(2)})\Big)\ h\ \\
&=
\sum_{\sigma \in \text{Alt}_3,\ h \in G} \
\alpha
\Big(
\beta_h(v_{\sigma(1)}\wedge v_{\sigma(2)})\wedge v_{\sigma(3)}\Big)\ h\\
&=
\sum_{\sigma \in \text{Alt}_3;\ g,h \in G} \
\alpha_g
\Big(
\beta_h(v_{\sigma(1)}\wedge v_{\sigma(2)})\wedge v_{\sigma(3)}\Big)\ gh \ .
\end{aligned}
$$
A similar computation for $\beta \circ \alpha$ together with 
reindexing over the group yields the result.
\end{proof}

The Gerstenhaber bracket takes a particularly nice form when we
consider square brackets of linear cocycles and brackets of
linear with constant cocycles:

\begin{cor}\label{niceform}
Consider cohomology 
classes $\alpha', \beta'$
in $\HH^2(S\# G)$ represented respectively by
a constant cocycle
$\alpha$ and a linear cocycle $\beta$ in $(H^2)^G$.
The Gerstenhaber bracket in $\HH^3(S\#G)$
of $\beta'$ with itself and of $\alpha'$ with $\beta'$
are represented by the cocycles 
$$
\hphantom{x}[\beta,\beta](v_1,v_2, v_3)
= 2
\sum_{\substack{g,h\in G\\\rule{0ex}{1.5ex}\sigma\in\Alt_3}}\,
 \beta_{gh^{-1}}\,
\left(\beta_h(v_{\sigma(1)}\wedge v_{\sigma(2)})\wedge v_{\sigma(3)}\right) 
\ g
$$
and
$$
\hphantom{x}[\alpha,\beta](v_1,v_2, v_3)\\
= 
\sum_{\substack{g,h\in G\\\rule{0ex}{1.5ex}\sigma\in\Alt_3}}\,
\alpha_{gh^{-1}}\,
\left(\beta_h(v_{\sigma(1)}\wedge v_{\sigma(2)})\wedge v_{\sigma(3)}\right)
\ g\ ,
$$
respectively.
\end{cor}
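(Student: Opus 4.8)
The plan is to obtain both identities as immediate specializations of Theorem~\ref{prop:2brackets}, which already identifies the Gerstenhaber bracket of degree-2 classes represented by cochains of polynomial degree at most~1 with the cochain bracket map of Definition~\ref{bracketdef}. Since a constant cochain has polynomial degree~0 and a linear cochain has polynomial degree~1, both classes $\alpha'$ and $\beta'$ satisfy the hypotheses of the theorem in each of the two cases at hand. Thus it will suffice to read off the relevant branch of Definition~\ref{bracketdef}; no new computation on the bar or Koszul resolutions is required.

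For the self-bracket $[\beta,\beta]$ of a linear cocycle, I would apply the linear--linear branch of the definition with both arguments equal to the same $\beta$. That branch produces a sum of two terms inside the bracket,
$$
\beta_{gh^{-1}}\big(\beta_h(v_{\sigma(1)}\wedge v_{\sigma(2)})\wedge v_{\sigma(3)}\big)
+\beta_{gh^{-1}}\big(\beta_h(v_{\sigma(1)}\wedge v_{\sigma(2)})\wedge v_{\sigma(3)}\big),
$$
which are literally identical once $\alpha=\beta$. Collecting them yields the stated factor of~$2$, with the sum over $g,h\in G$ and $\sigma\in\Alt_3$ inherited unchanged. For the mixed bracket $[\alpha,\beta]$ with $\alpha$ constant and $\beta$ linear, I would simply invoke the constant--linear branch of Definition~\ref{bracketdef}, which is exactly the asserted expression; there is no second summand because $\alpha$ is constant, and the constant--constant contribution (the only other conceivable piece) vanishes by definition.

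There is essentially no obstacle here beyond bookkeeping: the genuine content resides in Theorem~\ref{prop:2brackets} and in the symmetry of the Gerstenhaber bracket in degree~$2$ (so that $[\alpha,\beta]$ and $[\beta,\alpha]$ agree, consistent with the manifestly symmetric formula $\alpha\circ\beta+\beta\circ\alpha$), both already established. The only point I would verify with any care is that the two summands in the linear--linear branch truly coincide when $\alpha=\beta$—thereby producing the factor~$2$—rather than differing by a reindexing of $\sigma$; but this is immediate, since both terms carry the identical combination $\beta_{gh^{-1}}(\beta_h(\cdot\wedge\cdot)\wedge\cdot)$ summed over the same index set.
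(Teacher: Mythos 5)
Your proposal is correct and matches the paper's (implicit) argument exactly: the corollary is stated as an immediate specialization of Theorem~\ref{prop:2brackets}, reading off the linear--linear branch of Definition~\ref{bracketdef} with $\alpha=\beta$ (whose two identical summands give the factor of~$2$) and the constant--linear branch for $[\alpha,\beta]$. No further verification is needed beyond what you describe.
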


\section{PBW Condition and Gerstenhaber Bracket}\label{sec:PBW}

In this section, we give necessary and sufficient conditions
on a parameter to define a Drinfeld orbifold algebra
in terms of Hochschild cohomology.
We interpret Theorem~\ref{thm:4condns} 
in terms of cocycles and the Gerstenhaber bracket
in cohomology as realized on the set of cochains arising
from the Koszul resolution.
Our results should be compared with~\cite[\S2.2, (4), (5), (6)]{HOT},
where a factor of 2 is missing from the right side of (5).
See also~\cite[(1.9)]{Khare} for a somewhat different setting.

We want to describe precisely which parameter maps $\kappa$ result in a quotient $\cH_{\kappa}$
that satisfies the PBW condition, that is, defines a Drinfeld orbifold algebra.   
The algebras $\cH_{\kappa}$ are 
naturally expressed and analyzed in terms of the Koszul resolution of $S$.
Recall,  $\kappa: \Wedge^2V \rightarrow S\otimes \CC G$
with $\kappa = \sum_{g\in G}\kappa_g g$.
The parameter map $\kappa$ as well as its
linear and constant parts, $\kappa^L$ and $\kappa^C$, thus define
cochains on the Koszul resolution
and we identify $\kappa, \kappa^L, \kappa^C$ 
with elements of $C^{\hspace{.3ex}\DOT}$. Indeed, 
for each $g\in G$, the functions $\kappa_g g$, $\kappa_g^L g$, 
and $\kappa_g^C g$ (from $\Wedge^2 V$ to $S g$)
define elements 
of the cochain complex $C_g^{\hspace{.3ex}\DOT}$ of (\ref{C-cochains}).

We now determine a complete set of
necessary and sufficient conditions on these parameters
regarded as cochains in Hochschild cohomology
$\HHD(S\# G) \cong\HHD(S,S\#G)^G$.
(We use the chain maps converting between resolutions
discussed in Section~\ref{section:GB}.)
The significance of the following lemma and theorem 
thereafter lies in the expression of
the PBW property in terms of the Gerstenhaber bracket in cohomology.
We distinguish a cochain $[\alpha,\beta]$
on the Koszul resolution~(\ref{koszul-res2}) 
from its
cohomology class arising from the induced Gerstenhaber bracket
by using the phrase ``as a cochain'' 
where appropriate.
(Recall that $d$ is the differential on the Koszul resolution defined as in
 (\ref{koszul-diff}).)

\begin{lemma}\label{prop:3condns}
In Theorem~\ref{thm:4condns}, 
\begin{itemize}
\item
Condition~(ii)
holds if and only if $\kappa^L$ is a cocycle,
i.e., $d^*\kappa^L = 0$.
\item
For $\kappa^L$ in $H^{\DOT}$,
Condition~(iii) is equivalent to $[\kappa^L,\kappa^L] = 2 d^*\kappa^C$
as cochains. 
\item
For $\kappa^L$ in $H^{\DOT}$,
Condition~(iv) is equivalent to $[\kappa^C, \kappa^L]=0$ as a cochain.
\end{itemize}
\end{lemma}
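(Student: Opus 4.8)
The plan is to prove each of the three items by directly comparing the explicit formulas: the conditions (ii), (iii), (iv) of Theorem~\ref{thm:4condns}, the Koszul differential $d^*$ applied to cochains, and the cochain bracket of Corollary~\ref{niceform}. All three equalities live in $C^3 = \bigoplus_g Sg \otimes \Wedge^3 V^*$, so the strategy is to evaluate everything on an arbitrary triple $v_1 \wedge v_2 \wedge v_3$, decompose into the components supported on each $g \in G$, and match coefficients. The main conceptual input is already supplied by Section~\ref{section:GB}: Corollary~\ref{niceform} gives closed forms for $[\kappa^L,\kappa^L]$ and $[\kappa^C,\kappa^L]$ as cochains (using $\phi_{\DOT}$ and $\psi_{\DOT}$), so I do not need to reprove any bracket computation; I only need to recognize the right sides of (iii) and (iv) inside those formulas.

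For the first bullet, I would compute $d^*\kappa^L$ explicitly. Dualizing the Koszul differential (\ref{koszul-diff}) and recalling (via Remark~\ref{remark:kappa-mu}) that cochains supported on $g$ involve the twisted action, the coboundary $(d^*\kappa^L)(v_1,v_2,v_3)$ in the $g$-component is precisely $\sum_{\sigma\in\Alt_3}\kappa^L_g(v_{\sigma(2)},v_{\sigma(3)})(v_{\sigma(1)} - {}^g v_{\sigma(1)})$, taken in $S$. This is exactly the left side of Condition~(ii), so $d^*\kappa^L = 0$ if and only if (ii) holds. The key point to get right is the appearance of the factor $(v_{\sigma(1)} - {}^g v_{\sigma(1)})$ rather than just $v_{\sigma(1)}$: this comes from the two terms $v_{j_i}\ot 1 - 1\ot v_{j_i}$ in $d_p$ interacting with the bimodule structure $Sg$, where acting on the right by $v$ through $g$ produces ${}^g v$ acting on the left.

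For the second and third bullets I would use the same formula for $d^*\kappa^C$ (now one polynomial degree higher, landing in $C^3_g$ with constant-times-linear entries): the $g$-component of $d^*\kappa^C$ equals $\sum_{\sigma\in\Alt_3}\kappa^C_g(v_{\sigma(2)},v_{\sigma(3)})(v_{\sigma(1)} - {}^g v_{\sigma(1)})$, which is exactly (up to the factor) the right side of Condition~(iii). Then I would invoke Corollary~\ref{niceform}: the left side of (iii) is $\sum_{\sigma\in\Alt_3,\,h\in G}\kappa^L_{gh^{-1}}(\kappa^L_h(v_{\sigma(2)},v_{\sigma(3)}) \wedge v_{\sigma(1)})$ in each $g$-component, and after the reindexing and symmetrization built into Definition~\ref{bracketdef} this is precisely $\tfrac12[\kappa^L,\kappa^L]$. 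Hence Condition~(iii) reads $[\kappa^L,\kappa^L] = 2\,d^*\kappa^C$ as cochains, with the factor of $2$ emerging from the symmetric bracket map producing two matching summands. The third bullet is the analogous and simpler matching: Corollary~\ref{niceform} gives $[\kappa^C,\kappa^L]$ in each $g$-component as $\sum_{\sigma,h}\kappa^C_{gh^{-1}}(\kappa^L_h(v_{\sigma(2)},v_{\sigma(3)}) \wedge v_{\sigma(1)})$, which is the left side of Condition~(iv), so (iv) is equivalent to $[\kappa^C,\kappa^L]=0$.

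I expect the main obstacle to be bookkeeping rather than conceptual: carefully reconciling the argument orders and signs between Theorem~\ref{thm:4condns}, where the bracket-like expressions are written as $\kappa^L_{gh^{-1}}(v_{\sigma(1)}+{}^h v_{\sigma(1)}, \kappa^L_h(v_{\sigma(2)},v_{\sigma(3)}))$, and Definition~\ref{bracketdef}/Corollary~\ref{niceform}, where they appear as $\kappa^L_{gh^{-1}}(\kappa^L_h(v_{\sigma(1)}\wedge v_{\sigma(2)})\wedge v_{\sigma(3)})$. The hypothesis that $\kappa^L$ lies in $H^{\DOT}$ (i.e.\ is already a cocycle representative) is exactly what lets me pass between the symmetrized ${}^h v_{\sigma(1)}$ form and the plain $v_{\sigma(1)}$ form, using an identity like Equation~(\ref{h-noth}) from the proof of Theorem~\ref{prop:2brackets}. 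This is why bullets two and three require $\kappa^L \in H^{\DOT}$ whereas bullet one does not. Once the indexing is aligned, every step is a finite reindexing over $\Alt_3 \times G$, and the factor of $2$ in the second bullet is the only arithmetic subtlety worth highlighting explicitly — precisely the factor reported missing from \cite{HOT}.
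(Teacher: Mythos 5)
Your strategy is the same as the paper's own proof: compute $d^*$ directly on the Koszul complex for the first bullet, then use Equation~\ref{h-noth} and Corollary~\ref{niceform} to match $g$-components of Conditions~(iii) and~(iv) with the bracket cochains of Definition~\ref{bracketdef}. Your first bullet is correct, including the explanation of how the twisted bimodule structure on $Sg$ produces the factors $(v_{\sigma(1)}-{}^gv_{\sigma(1)})$, and your third bullet survives your imprecision about constants, because $c\cdot[\kappa^C,\kappa^L]=0$ is equivalent to $[\kappa^C,\kappa^L]=0$ for any nonzero scalar $c$ (char $k\neq 2$).

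The second bullet, however, contains a concrete arithmetic failure, and it sits exactly on the point this lemma exists to settle. You assert that the left side of Condition~(iii) equals $\sum_{\sigma,h}\kappa^L_{gh^{-1}}\bigl(\kappa^L_h(v_{\sigma(2)},v_{\sigma(3)})\wedge v_{\sigma(1)}\bigr)$, hence one half of the coefficient of $g$ in $[\kappa^L,\kappa^L]$. Two factors are dropped in that identification: Equation~\ref{h-noth} converts $v_{\sigma(1)}+{}^hv_{\sigma(1)}$ into $2v_{\sigma(1)}$ (a factor of $2$), and moving $v_{\sigma(1)}$ out of the first argument of $\kappa^L_{gh^{-1}}$ into the wedge position after $\kappa^L_h(\cdots)$ costs a sign by skew-symmetry. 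The correct statement, and the one the paper proves, is that the left side of (iii) is the \emph{negative of the full} coefficient of $g$ in $[\kappa^L,\kappa^L]$. Likewise the right side of (iii) is the coefficient of $g$ in $-2d^*\kappa^C$, not $+2d^*\kappa^C$, since (iii) carries $({}^gv_{\sigma(1)}-v_{\sigma(1)})$ while $d^*\kappa^C$ carries $(v_{\sigma(1)}-{}^gv_{\sigma(1)})$. Only with both corrections does (iii) become $-[\kappa^L,\kappa^L]=-2d^*\kappa^C$, i.e., $[\kappa^L,\kappa^L]=2d^*\kappa^C$. Chaining your stated identifications instead yields $\tfrac12[\kappa^L,\kappa^L]=\pm 2d^*\kappa^C$, i.e., $[\kappa^L,\kappa^L]=\pm 4d^*\kappa^C$, a genuinely different condition whenever $d^*\kappa^C\neq 0$. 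Since the precise constant is the published correction to~\cite{HOT} that motivates this lemma, the bookkeeping you defer to the end is not a routine afterthought but the entire content of the second bullet; once it is carried out correctly, your argument coincides with the paper's proof.
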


\begin{proof}
The cochain $d^*\kappa^L$ is zero exactly when $\kappa^L$ takes to 0 all
input of the form
$$
\begin{aligned}
 d_3(v_1\wedge v_2\wedge v_3) & = (v_1\ot 1-1\ot v_1)\ot v_2\wedge v_3
      - (v_2\ot 1 - 1\ot v_2)\ot v_1\wedge v_3 \\
     & \hspace{5cm} + (v_3\ot 1 -1\ot v_3)\ot v_1\wedge v_2\, ,
\end{aligned}
$$
in other words, when
$$
  0 = v_1\kappa^L(v_2,v_3)-\kappa^L(v_2,v_3)v_1
   + v_2\kappa^L(v_3,v_1)- \kappa^L(v_3,v_1)v_2 
    + v_3\kappa^L(v_1,v_2) - \kappa^L(v_1,v_2)v_3
$$
in $S\#G$.
This is equivalent to
$$ 
\begin{aligned}
0\ =\ &\ v_1\kappa^L_g(v_2,v_3)g - \kappa^L_g(v_2,v_3)gv_1
    + v_2\kappa^L_g(v_3,v_1)g \\
&-\kappa^L_g(v_3,v_1)gv_2 
    + v_3\kappa^L_g(v_1,v_2)g - \kappa^L_g(v_1,v_2)gv_3
\ 
\end{aligned}
$$
for each $g$ in $G$.
We rewrite this expression using the commutativity of $S$ and moving
all factors of $g$ to the right:
$$
  0 = \kappa^L_g(v_2,v_3)( v_1 - {}^gv_1)
    + \kappa^L_g(v_3,v_1) ( v_2 - {}^gv_2)
    + \kappa^L_g(v_1,v_2) ( v_3 - {}^gv_3),
$$
which is precisely Theorem~\ref{thm:4condns}(ii).

Next, notice that we may apply 
Equation~\ref{h-noth} (in the proof of
Theorem~\ref{prop:2brackets}) to
$\beta=\kappa^L$, under the assumption that $\kappa^L$ lies in $H^2$.
Then for each $g$ in $G$,
the left side of Theorem~\ref{thm:4condns}(iii)
is the opposite of the coefficient of $g$
in $[\kappa^L,\kappa^L]$ by Definition~\ref{bracketdef} 
(see~Corollary~\ref{niceform}) and the skew-symmetry of $\kappa^L$.
By a similar calculation to that for $\kappa^L$, the right side of 
Theorem~\ref{thm:4condns}(iii) is the coefficient of $g$ in
$$
   -2 d_3^*\kappa^C(v_1\wedge v_2\wedge v_3)
   = 2 \sum_{g\in G}\sum_{\sigma\in\Alt_3} \kappa^C_g(v_{\sigma(2)},v_{\sigma(3)})
   ( {}^g v_{\sigma(1)} - v_{\sigma(1)}) g .
$$
Hence, Theorem~\ref{thm:4condns}(iii) 
is equivalent to $[\kappa^L,\kappa^L] = 2d_3^*\kappa^C$.
This condition differs from~\cite[(5)]{HOT} where the factor
of 2 is missing.

We again compare coefficients of fixed $g$ in $G$ and apply
Equation~\ref{h-noth} to see that
Theorem~\ref{thm:4condns}(iv) is equivalent to $[\kappa^C,\kappa^L]=0$
by Definition~\ref{bracketdef}. Note that this is 
equivalent to~\cite[(6)]{HOT} when $k={\mathbb R}$.
\end{proof}

We are now ready to express the PBW property purely in cohomological terms.
Recall that $H^{\DOT}$ is the fixed space of representatives of elements 
in $\HH^{\DOT}(S, S\# G)$ defined in (\ref{H-DOT}). 
\begin{thm}
\label{4Hochschildconditions}
A quotient algebra $\cH_{\tilde{\kappa}}$ 
is a Drinfeld orbifold algebra if and only if
$\cH_{\tilde{\kappa}}$ is isomorphic
to $\cH_{\kappa}$ as a filtered algebra 
for some parameter $\kappa$ satisfying
\begin{itemize}
\item[(i)] $\kappa$ is $G$-invariant,
\item[(ii)] The linear part
of $\kappa$ is a cocycle in $H^{\DOT}$,
\item[(iii)] The Gerstenhaber square bracket of the linear part
of $\kappa$ satisfies 
$[\kappa^L, \kappa^L] = 2 d^*(\kappa^C)$ as cochains,
\item[(iv)] The bracket of the linear with the constant part
of $\kappa$ is zero:
$[\kappa^C, \kappa^L]=0$ as a cochain.
\end{itemize}
\end{thm}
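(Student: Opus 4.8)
The plan is to read this theorem as the cohomological repackaging of Theorem~\ref{thm:4condns}, using Lemma~\ref{prop:3condns} to translate each algebraic condition into a statement about cocycles and Gerstenhaber brackets, together with a reduction that lets us assume the linear part of the parameter lies in the representative space $\HD$. The only ingredient beyond those two results is this reduction, and isolating it explains the clause ``isomorphic to $\cH_{\kappa}$ for some parameter $\kappa$'' in the statement.

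For the direction assuming the four cohomological conditions, I would argue directly. Suppose $\kappa$ satisfies (i)--(iv), with $\kappa^L$ a cocycle lying in $\HD$. Since $\kappa^L\in\HD$, all three bullets of Lemma~\ref{prop:3condns} apply: the cocycle condition in~(ii) is equivalent to Condition~(ii) of Theorem~\ref{thm:4condns} (the first bullet needs no membership in $\HD$), while~(iii) and~(iv) here are equivalent to Conditions~(iii) and~(iv) there; and Condition~(i) is literally $G$-invariance. Thus $\kappa$ meets all four hypotheses of Theorem~\ref{thm:4condns}, so $\cH_{\kappa}$ is a Drinfeld orbifold algebra. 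Because the PBW property concerns only $\gr$, it is invariant under filtered isomorphism, and hence $\cH_{\tilde{\kappa}}\cong\cH_{\kappa}$ is a Drinfeld orbifold algebra as well.

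For the converse I would start from a Drinfeld orbifold algebra $\cH_{\tilde{\kappa}}$. By Theorem~\ref{thm:4condns} its parameter satisfies the four algebraic conditions; in particular the first bullet of Lemma~\ref{prop:3condns} shows $d^*\tilde{\kappa}^L=0$, so $\tilde{\kappa}^L$ is a linear cocycle, though it need not already lie in the chosen space $\HD$. Since $\HD\cong\HHD(S,S\#G)$ realizes each class by a unique element, the class of $\tilde{\kappa}^L$ has a unique representative $\kappa^L\in\HD$, and I would write $\tilde{\kappa}^L-\kappa^L=d^*\tau$; because $d^*$ raises polynomial degree by one, $\tau\colon V\to kG$ is forced to be a constant $1$-cochain. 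The filtered change of generators $\Phi(v)=v+\tau(v)$, $\Phi(g)=g$ is then a filtered automorphism of $T(V)\#G$ carrying the ideal defining $\cH_{\tilde{\kappa}}$ onto the ideal defining some $\cH_{\kappa}$ whose linear part is exactly $\kappa^L\in\HD$; this gives a filtered isomorphism $\cH_{\tilde{\kappa}}\cong\cH_{\kappa}$. As the PBW property depends only on $\gr$, $\cH_{\kappa}$ is again a Drinfeld orbifold algebra, so Theorem~\ref{thm:4condns} applies to $\kappa$ and supplies both its $G$-invariance, which is Condition~(i), and Conditions~(ii)--(iv) in algebraic form. Now that $\kappa^L\in\HD$, the three bullets of Lemma~\ref{prop:3condns} rewrite these as the cohomological conditions~(ii)--(iv), completing this direction.

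The step I expect to be the main obstacle is verifying that subtracting the linear coboundary $d^*\tau$ is realized by the honest filtered automorphism $\Phi$. Concretely, one expands $\Phi(v)\Phi(w)-\Phi(w)\Phi(v)$ and checks, using the skew relations $g\,v={}^g v\,g$ in $T(V)\#G$, that the polynomial-degree-$1$ correction to $vw-wv$ is exactly $d^*\tau(v\wedge w)$, while the polynomial-degree-$0$ terms $\tau(v)\tau(w)-\tau(w)\tau(v)$ merely readjust the constant part $\kappa^C$, which the theorem leaves unconstrained; this pins down $\kappa^C$ so that $\Phi$ sends each defining relation of $\cH_{\tilde{\kappa}}$ to one of $\cH_{\kappa}$. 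This is nothing more than the gauge-equivalence of cohomologous cocycles transported to the present filtered setting, and once it is granted every remaining step is a direct appeal to Theorem~\ref{thm:4condns} and Lemma~\ref{prop:3condns}.
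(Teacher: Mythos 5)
Your proposal follows the same route as the paper's own proof: both directions reduce to Theorem~\ref{thm:4condns} via Lemma~\ref{prop:3condns}, and the only new ingredient is the gauge equivalence replacing $\tilde{\kappa}^L$ by its unique cohomology representative $\kappa^L$ in $H^2$ at the cost of a constant $1$-cochain $\tau$ and a readjusted constant part $\kappa^C$. Your forward direction and your final bookkeeping (transfer of PBW along a filtered isomorphism, then Theorem~\ref{thm:4condns} plus Lemma~\ref{prop:3condns} applied to $\kappa$) coincide with the paper. Your framing of the gauge step as an automorphism $\Phi$ of $T(V)\#G$ carrying defining relations exactly to defining relations is in fact a mild streamlining of the paper, which instead builds a map $f:T(V)\#G\rightarrow\cH_{\kappa}$ and a separate inverse $f'$ (requiring the identity $\rho\circ(d^*\rho)=2\,\rho\ot\rho$ for $\rho$ valued in $kG$).

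However, there is a genuine gap at the step you flag as the ``main obstacle,'' and it is not the commutator expansion: it is the well-definedness of $\Phi$ itself. The assignment $\Phi(v)=v+\tau(v)$, $\Phi(g)=g$ extends to an algebra endomorphism of $T(V)\#G$ only if it respects the skew relations $g\,v={}^g v\,g$, which forces ${}^g(\tau(v))=\tau({}^gv)$ for all $g\in G$, $v\in V$, i.e.\ $\tau$ must be $G$-invariant. A cochain $\tau$ chosen merely to satisfy $d^*\tau=\tilde{\kappa}^L-\kappa^L$ need not be invariant, and without invariance your ``filtered automorphism'' does not exist and the argument stops. The repair is exactly what the paper does: note that $d^*\tau=\tilde{\kappa}^L-\kappa^L$ \emph{is} $G$-invariant (since $\tilde{\kappa}$ is invariant by Theorem~\ref{thm:4condns}(i), and $\kappa^L$ is invariant by uniqueness of representatives in $H^2$ under the $G$-action), that $d^*$ commutes with the $G$-action, and that $|G|$ is invertible in $k$, so $\tau$ may be replaced by its average $\frac{1}{|G|}\sum_{g\in G}{}^g\tau$ without changing $d^*\tau$. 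With that inserted, the rest of your argument goes through; one should also record, as you implicitly do, that applying $\Phi$ to the relation produces the extra degree-zero term $\tau\circ\tilde{\kappa}^L(v,w)$ coming from $\Phi(\tilde{\kappa}^L(v,w))$, so that the correct normalization is $\kappa^C=\tilde{\kappa}^C+\tau\circ\tilde{\kappa}^L-\tau\ot\tau$.
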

\begin{proof}
Write $\tilde{\kappa}=\tilde{\kappa}^L+\tilde{\kappa}^C$.
If $\cH_{\tilde{\kappa}}$ is a Drinfeld orbifold algebra,
then it satisfies Condition~(ii) of Theorem~\ref{thm:4condns}.  
Lemma~\ref{prop:3condns} then implies
that $\tilde{\kappa}^L$ is a cocycle in $\HH^{\DOT}(S,S\#G)$ expressed
with respect to the Koszul resolution, and it thus lies in the set
of cohomology representatives $(H^{\DOT})^G$ up to a coboundary:
$$
\tilde{\kappa}^L=\kappa^L+d^*\rho
$$
for some 2-cocycle $\kappa^L$ in $(H^2)^G$ and some 1-cochain $\rho$.
Set $$\kappa^C= \tilde{\kappa}^C + \rho \circ \tilde{\kappa}^L -\rho\ot\rho$$ 
where $(\rho\ot\rho) (v\wedge w) := \rho(v)\rho(w) - \rho(w)\rho(v)$ for all $v,w\in V$.
Let $\kappa=\kappa^C+\kappa^L$.  

We may assume without loss of generality that $\rho$ is $G$-invariant.
(Note that $d^*\rho = \tilde{\kappa}^L - \kappa^L$ is
$G$-invariant. Since $d^*$ commutes with the group action and the order
of $G$ is invertible in $k$, we may replace $\rho$ by $\frac{1}{|G|}
\sum_{g\in G} {}^g \rho$ to obtain a cochain having the same image under $d^*$.)
Also note that without loss of generality $\rho$ takes values in $kG$ since
$d^*\rho$ has polynomial degree 1.

Define a map $f: T(V)\# G \rightarrow \cH_{\kappa}$ by
$$
  f(v) = v+\rho(v), \ \ \ f(g) = g
$$
for all $v\in V$, $g\in G$; since $\rho$ is $G$-invariant, these values extend
uniquely to give an algebra homomorphism. 
Note that $f$ is surjective by an inductive argument on the degrees of elements.

We show first that the kernel of $f$
contains the ideal 
$$
  (vw - wv - \tilde{\kappa}^L(v,w) -\tilde{\kappa}^C(v,w) \mid v,w\in V ) ,
$$
which implies that $f$ induces an algebra homomorphism from $\cH_{\tilde{\kappa}}$
onto $\cH_{\kappa}$. 
By the definition of $f$, 
$$
\begin{aligned}
& f(vw-wv-\tilde{\kappa}^L(v,w) -\tilde{\kappa}^C(v,w)) \\
& \ \ = \ (v+\rho(v))(w+\rho(w)) - (w+\rho(w))(v+\rho(v)) \\
 & \ \hspace{2cm} - \tilde{\kappa}^L(v,w) - \rho(\tilde{\kappa}^L(v,w)) - \tilde{\kappa}^C(v,w)\\
 & \ \ = \ vw-wv + v\rho(w) + \rho(v)w -w\rho(v)-\rho(w)v +\rho(v)\rho(w) -\rho(w)\rho(v)\\
 & \ \hspace{2cm} -\tilde{\kappa}^L(v,w) - \rho\circ\tilde{\kappa}^L(v,w) -\tilde{\kappa}^C(v,w) \\
& \ \ = \ vw-wv + d^*\rho(v\wedge w) + (\rho\ot\rho)(v\wedge w) 
   -\tilde{\kappa}^L(v,w) -\rho\circ\tilde{\kappa}^L(v,w) -\tilde{\kappa}^C(v,w)\\
  & \ \ = \ vw-wv -\kappa^L(v,w) - \kappa^C (v,w)  \ =  \ 0
\end{aligned}
$$
in $\cH_{\kappa}$. Thus the ideal generated by all $vw-wv-\tilde{\kappa}^L(v,w)
-\tilde{\kappa}^C(v,w)$ is in the kernel of $f$.

Next we define an inverse to $f$ by replacing
$\rho$ with $-\rho$:
Define an algebra homomorphism
$f' : T(V) \# G \rightarrow \cH_{\tilde{\kappa}}$ by
$f'(v) = v - \rho(v)$, $f(g)=g$ for all $v\in V$, $g\in G$.
We have $\kappa^L = \tilde{\kappa}^L - d^*\rho$ and
\begin{eqnarray*}
  \tilde{\kappa}^C & = & \kappa^C - \rho\circ \tilde{\kappa}^L + \rho\ot\rho\\
    & = & \kappa^C - \rho\circ \kappa^L - \rho\circ (d^*\rho) + \rho\ot\rho.
\end{eqnarray*}
Extending $\rho$ in the usual way from a function on $V$ to a function on $V\ot kG$
by setting $\rho(vg) := \rho(v) g$ for all $v\in V$, $g\in G$, we calculate
\begin{eqnarray*}
  \rho\circ (d^*\rho) (v\wedge w) & = & \rho(v \rho(w) + \rho(v)w - w\rho(v) -\rho(w) v)\\
   & = & \rho(v)\rho(w) + \rho(v)\rho(w) - \rho(w)\rho(v) - \rho(w)\rho(v) \\
 & = & 2(\rho\ot \rho)(v\wedge w),
\end{eqnarray*}
since $\rho$ has image in $kG$.
Thus we may rewrite 
\begin{eqnarray*}
 \tilde{\kappa}^C & = & \kappa^C - \rho\circ \kappa^L - 2\rho\ot\rho + \rho\ot \rho\\
   & = & \kappa^C - \rho\circ\kappa^L - \rho\ot\rho\\
  &= & \kappa^C + (-\rho)\circ \kappa^L - (-\rho)\ot (-\rho). 
\end{eqnarray*} 
An argument similar to that above for $f$ (replacing $\rho$ by $-\rho$)
shows that
the function $f'$
induces an algebra homomorphism from $\cH_{\kappa}$ onto $\cH_{\tilde{\kappa}}$. 
By its definition, $f'$ is inverse to $f$. Therefore $\cH_{\kappa}$ and $\cH_{\tilde{\kappa}}$
are isomorphic as filtered algebras. 
(Note that this isomorphism did 
not require that $\cH_{\tilde{\kappa}}$
satisfy the PBW condition, only that
$\tilde{\kappa}$ be a cocycle.) 
As $\gr \cH_{\kappa}
\cong \gr \cH_{\tilde{\kappa}} \cong S\# G$, the quotient algebra
$\cH_{\kappa}$ is also a Drinfeld orbifold algebra. 
Theorem~\ref{thm:4condns} and 
Lemma~\ref{prop:3condns} then imply the four conditions of the theorem.

Conversely, assume $\cH_{\tilde{\kappa}}$ is isomorphic, as a filtered algebra,
to some $\cH_{\kappa}$ satisfying the four conditions of the theorem.
Then Theorem~\ref{thm:4condns} and Lemma~\ref{prop:3condns} 
imply that $\cH_{{\kappa}}$
is a Drinfeld orbifold algebra.  As the isomorphism preserves the filtration,
$$\gr\cH_{\tilde{\kappa}}\cong \gr\cH_{\kappa} \cong S(V)\#G$$
as algebras, and hence $\cH_{\tilde{\kappa}}$ is a Drinfeld orbifold algebra as well.
Note that Theorem~\ref{prop:2brackets} shows that the bracket
formula in the statement of the theorem indeed coincides with the
Gerstenhaber bracket on cohomology.
\end{proof}

\begin{remark}
{\em 
We compare the above results to Gerstenhaber's original theory
of deformations, since
every Drinfeld orbifold algebra
defines a deformation of $S\# G$ (see Section~\ref{deformations}).
The theory of Hochschild cohomology provides necessary conditions
for ``parameter maps'' to define a deformation.
Given a $k$-algebra $R$ and arbitrary $k$-linear maps 
$\mu_1, \mu_2: R\otimes R\rightarrow R$, we say $\mu_1$
and $\mu_2$ {\em extend} to first and second order approximations, respectively, of 
a deformation $R[t]$ of $R$ over $k[t]$ if there are $k$-linear maps
$\mu_i: R\otimes R\rightarrow R$ ($i\geq 3$) for which
the multiplication in $R[t]$ satisfies
$$
  r*s = rs + \mu_1(r\ot s)t + \mu_2(r\ot s) t^2 + \mu_3(r\ot s)t^3 + \cdots
$$
for all $r,s\in R$, where $rs$ is the product in $R$.
Associativity forces $\mu_1$ to define a cocycle in $\HH^2(R)$;
in addition, its Gerstenhaber square bracket
must be twice the differential applied to $\mu_2$:
$$
[\mu_1, \mu_1]
 = 2\delta_3^* \mu_2\ .
$$
Indeed, by using (\ref{eqn:kappa-mu})
and (\ref{eqn:kappa-mu2}), we find that the
equation $[\kappa^L, \kappa^L] = 2d^* \kappa^C$ is a consequence of 
the equation $[\mu_1,\mu_1]= 2 \delta_3^*\mu_2$:
The left side of Theorem~\ref{thm:4condns}(iii) is both equal to 
$-[\kappa^L,\kappa^L]$
applied to $v_1\wedge v_2\wedge v_3$ and to 
$-[\mu_1,\mu_1]$ applied to $v_1\wedge v_2\wedge v_3$
by our previous analysis, identifying $\alpha$ in the Braverman-Gaitsgory approach with
the restriction of $\mu_1$ to the space of relations $R$. 
The right side of Theorem~\ref{thm:4condns}(iii) is both 
equal to $-2d^*\kappa^C$ applied
to $v_1\wedge v_2\wedge v_3$ and to $-2\delta_3^*\mu_2$
applied to $v_1\wedge v_2\wedge v_3$ since $\mu_2\circ \phi_2 = \kappa^C$
and $\phi$ is a chain map (see (\ref{eqn:phip})). 

The square bracket $[\mu_1, \mu_1]$
is called the {\em primary obstruction} to integrating a map $\mu_1$ 
to a deformation:
If a deformation exists with first-order approximation $\mu_1$, then
$[\mu_1, \mu_1]$ is a coboundary, i.e., defines the 
zero cohomology class of the
Hochschild cohomology $\HH^3(R)$.

The parameter maps $\kappa^L$ and $\kappa^C$ 
(arising from the Koszul resolution)
play the role 
of the first and second order approximation maps $\mu_1$ and $\mu_2$
(arising from the bar complex).
We see in the proof of Theorem~\ref{degreemu_1} that each $\kappa_g^L g$
is in fact a {\em cocycle} 
when $\cH_{\kappa}$ is a Drinfeld orbifold algebra, and each $\kappa^C_g g$ 
defines a second order approximation to the deformation.
In fact, we expect $\kappa^L$ to be invariant 
whenever $\cH_{\kappa}$ is a Drinfeld orbifold algebra
since $\HHD(S\#G)\cong \HHD(S,S\#G)^G$.
Note however that the theorem above goes beyond these elementary observations
and Gerstenhaber's original formulation, which
only give necessary conditions.
}
\end{remark}

\vspace{2ex}

We now apply Theorem~\ref{4Hochschildconditions} in special cases
to determine Drinfeld orbifold algebras from the set of
necessary and sufficient conditions given in that theorem
(in terms of Gerstenhaber brackets).

Recall that the Lie orbifold algebras 
are exactly the PBW algebras $\cH_{\kappa}$
in which the linear part of the parameter $\kappa$
is supported on the identity group element $1_G$ alone.  
Interpreting Proposition~\ref{PBW} in homological language,
we obtain necessary and sufficient conditions for $\kappa$
to define a Lie orbifold algebra in terms of the Gerstenhaber bracket:

\begin{cor}
Assume $\kappa^L$ is supported on $1_G$. Then
$\cH_{\kappa}$ is a Lie orbifold algebra if and only if
\begin{itemize}
\item[(a)] $\kappa^L$ is a Lie bracket on $V$,
\item[(b)] both $\kappa^L$ and $\kappa^C$ are
$G$-invariant cocycles (define elements of $\HH^2(S\#G)$),
\item[(c)] 
$[\kappa^C, \kappa^L]=0$ as a cochain.
\end{itemize}
\end{cor}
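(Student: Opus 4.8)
The plan is to deduce the corollary directly from Proposition~\ref{PBW}, translating each of its three elementary conditions into the homological language supplied by Lemma~\ref{prop:3condns} and Corollary~\ref{niceform}. First I would fix the identifications: since $\kappa^L$ is supported on $1_G$, its only nonzero component is $\kappa^L_{1}$, which I take to be the bracket $[\cdot,\cdot]_\g$, and I set $a_g := \kappa^C_g$. With these choices $\cH_\kappa$ is precisely the algebra appearing in Proposition~\ref{PBW}, so $\cH_\kappa$ is a Lie orbifold algebra if and only if the three conditions of that proposition hold, and the task reduces to matching those conditions against (a), (b), (c).

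For (a), I would note that because $\kappa^L$ is supported on $1_G$, every summand in the formula for $[\kappa^L,\kappa^L]$ in Corollary~\ref{niceform} carries the group element $g$ arising from the product $(gh^{-1})h$, which forces $g=h=1$; hence $[\kappa^L,\kappa^L]$ is supported on $1_G$ and its single component equals twice the Jacobiator $\sum_{\sigma\in\Alt_3}[[v_{\sigma(1)},v_{\sigma(2)}]_\g,v_{\sigma(3)}]_\g$. Thus $[\kappa^L,\kappa^L]=0$ as a cochain is equivalent to the Jacobi identity, i.e. to $\kappa^L$ being a Lie bracket, which is exactly Proposition~\ref{PBW}(1) recast as (a). For (b), the requirement that $G$ act by automorphisms is the $G$-invariance of $\kappa^L$ from Theorem~\ref{thm:4condns}(i), while the requirement that the $a_g$ define a Drinfeld Hecke algebra is, via Lemma~\ref{lemma:J}(2) and Lemma~\ref{prop:3condns}, exactly the statement that $\kappa^C$ is a $G$-invariant cocycle ($d^*\kappa^C=0$); here $\kappa^L$ supported on $1_G$ is automatically a $G$-invariant cocycle lying in $H^{\DOT}$, since $V^{1}=V$ makes Condition~(ii) of Theorem~\ref{thm:4condns} hold vacuously. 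Collecting these yields (b). For (c), I would specialize Condition~(iv) of Theorem~\ref{thm:4condns}: only the $h=1$ term survives, and it collapses to the compatibility relation of Lemma~\ref{lemma:J}(3), which Lemma~\ref{prop:3condns} identifies with $[\kappa^C,\kappa^L]=0$ as a cochain, giving (c).

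The step I expect to require the most care is confirming that no information is lost through the filtered-isomorphism freedom built into Theorem~\ref{4Hochschildconditions}, where the linear part is only guaranteed to be a cocycle up to a coboundary $d^*\rho$ before one changes parameters. The observation that removes this difficulty is that a linear parameter supported on $1_G$ already lies in $H^{\DOT}$, so no coboundary correction is needed and I may take $\kappa=\tilde\kappa$ throughout. The remaining delicate bookkeeping is the collapse of the group-element sums in Corollary~\ref{niceform} and in Theorem~\ref{thm:4condns}(iv) once $\kappa^L$ is concentrated on the identity; verifying that these collapses produce exactly the Jacobiator and the compatibility relation, respectively, is routine but is where indices and the factor of $2$ must be tracked carefully.
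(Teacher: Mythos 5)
Your proof is correct, and its organization differs from the paper's in a way worth recording. The paper proves the forward direction by verifying conditions (i)--(iv) of Theorem~\ref{4Hochschildconditions} from (a), (b), (c) (opening with the same key observation you make, that $[\kappa^L,\kappa^L]$ collapses to twice the Jacobiator when $\kappa^L$ is supported on $1_G$), and proves the converse by combining Proposition~\ref{PBW} with Theorem~\ref{4Hochschildconditions}. You instead pivot on Proposition~\ref{PBW} in both directions and translate its three conditions one by one into (a), (b), (c) using Lemma~\ref{lemma:J}, Lemma~\ref{prop:3condns}, and the collapse of the bracket formula of Corollary~\ref{niceform}; in particular you make explicit that ``the $a_g$ define a Drinfeld Hecke algebra'' is equivalent to ``$\kappa^C$ is a $G$-invariant cocycle,'' an identification the paper uses only implicitly. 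What your route buys: by never invoking Theorem~\ref{4Hochschildconditions}, you sidestep the filtered-isomorphism and coboundary-correction freedom built into that theorem; the paper's converse applies that theorem to $\kappa$ itself, which is legitimate precisely because a linear parameter supported on $1_G$ already lies in $H^{\DOT}$ --- the point you spell out in your final paragraph. What the paper's route buys is brevity, since Theorem~\ref{4Hochschildconditions} packages the algebra-to-cohomology dictionary once and for all. One wording slip to fix: you write that $\kappa^L$ supported on $1_G$ is ``automatically a $G$-invariant cocycle''; only the cocycle property and membership in $H^{\DOT}$ are automatic, while $G$-invariance of $\kappa^L$ is a genuine hypothesis that (as your preceding clause correctly says) enters through ``$G$ acts as automorphisms'' in Proposition~\ref{PBW}(1).
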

\begin{proof}
First note that
$[\kappa^L,\kappa^L]=0$ exactly when $\kappa^L$ defines a Lie bracket
on $V$.
Suppose Conditions~(a), (b), and (c) hold.
Condition~(b) implies parts~(i) and (ii) of Theorem~\ref{4Hochschildconditions}.
It also implies that $d^*(\kappa^C)=0$.
Condition~(a) implies that $[\kappa^L, \kappa^L]=0$,
and part (iii) of Theorem~\ref{4Hochschildconditions} is satisfied as well.
Condition~(c) is part~(iv) of Theorem~\ref{4Hochschildconditions}.
Hence, Theorem~\ref{4Hochschildconditions}
implies that $\cH_{\kappa}$ is a Lie orbifold algebra.

Conversely,  assume that $\cH_{\kappa}$ is a Lie orbifold algebra.
By Proposition~\ref{PBW},
$\kappa^L$ defines a Lie bracket on $V$
and hence $[\kappa^L,\kappa^L]=0$.
Theorem~\ref{4Hochschildconditions} then not only implies 
Condition~(c), but also that
$\kappa^L$ and $\kappa^C$ are both cocycles with $\kappa$ $G$-invariant.
But
$\kappa$ is $G$-invariant if and only if both
$\kappa^L, \kappa^C$ are $G$-invariant.
Hence, Condition~(b) holds.
\end{proof}

Recall that $(\HD)^G\cong \HHD(S\#G)$
 and that $\CD$ and $\HD$ are sets of cochains
and cohomology representatives, respectively (see~(\ref{C-cochains}) and~(\ref{H-DOT})). 
Given $\kappa^C,\kappa^L$ in $(H^2)^G$ of homogeneous polynomial
degrees $0$ and $1$, respectively,
the sum $\kappa:=\kappa^C+\kappa^L$ is a parameter function
$V\wedge V\rightarrow (k\oplus V)\otimes kG$
defining a quotient algebra $\cH_{\kappa}$.
The last result implies immediately
that for
$\kappa^L$ supported on $1_G$,  
the algebra $\cH_{\kappa}$ is a Lie orbifold algebra when
 $\kappa^L$ is a Lie bracket on $V$ and the cochain $[\kappa^C, \kappa^L]$ is zero
on the Koszul resolution. 
The hypothesis that $\kappa^L$ be a Lie bracket is not as restrictive
as one might think.  In fact, if $\kappa^L$ is a noncommutative Poisson structure
(i.e., with Gerstenhaber square bracket $[\kappa^L,\kappa^L]$ zero
in cohomology), then $\kappa^L$ is automatically a Lie bracket, as we see in the next corollary.

\begin{cor}\label{offkernel}
Suppose a linear cochain $\kappa^L$ in $C^2$ 
is supported on the kernel of the representation $G\rightarrow\text{GL}(V)$
and that $[\kappa^L,\kappa^L]$ is a coboundary.
Then $[\kappa^L,\kappa^L]=0$ as a cochain.
\end{cor}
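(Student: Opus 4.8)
The plan is to combine two observations about the cochain $[\kappa^L,\kappa^L]$: that it is supported on the kernel $N$ of the representation $G\to\text{GL}(V)$, and that a coboundary of polynomial degree $1$ necessarily vanishes on exactly the part of the group indexed by $N$. First I would record the support statement. Writing $N$ for the kernel of $G\to\text{GL}(V)$, the hypothesis that $\kappa^L$ is supported on $N$ means $\kappa^L_h=0$ unless $h\in N$. Applying the square-bracket formula of Corollary~\ref{niceform},
$$[\kappa^L,\kappa^L](v_1,v_2,v_3)=2\sum_{g,h\in G}\sum_{\sigma\in\Alt_3}\kappa^L_{gh^{-1}}\big(\kappa^L_h(v_{\sigma(1)}\wedge v_{\sigma(2)})\wedge v_{\sigma(3)}\big)\,g,$$
a summand can be nonzero only when both $\kappa^L_h\neq 0$ and $\kappa^L_{gh^{-1}}\neq 0$, i.e. when $h\in N$ and $gh^{-1}\in N$. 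Since $N$ is a subgroup, this forces $g=(gh^{-1})h\in N$, and hence $[\kappa^L,\kappa^L]$ is supported on $N$.

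Next I would analyze the coboundary. Because $[\kappa^L,\kappa^L]$ is homogeneous of polynomial degree $1$ and $d^*$ raises polynomial degree by exactly $1$, the cochain $\gamma$ with $[\kappa^L,\kappa^L]=d^*\gamma$ may be taken constant (replace $\gamma$ by its polynomial-degree-$0$ component, which is all that $d^*$ sees in degree $1$). The computation of $d_3^*$ on a constant cochain carried out in the proof of Lemma~\ref{prop:3condns} then gives, for each $g\in G$, the $g$-component
$$(d_3^*\gamma)_g(v_1\wedge v_2\wedge v_3)=-\sum_{\sigma\in\Alt_3}\gamma_g(v_{\sigma(2)},v_{\sigma(3)})\,({}^gv_{\sigma(1)}-v_{\sigma(1)}).$$
The key point is that every term carries the factor ${}^gv-v$, which vanishes identically precisely when $g$ acts trivially on $V$, that is, when $g\in N$. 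Thus the part of $d^*\gamma$ supported on $N$ is zero.

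Finally I would combine the two halves by comparing $g$-components in the equality $[\kappa^L,\kappa^L]=d^*\gamma$: for $g\notin N$ the left-hand side vanishes by the support statement, while for $g\in N$ the right-hand side vanishes because ${}^gv-v=0$. Hence every $g$-component of $[\kappa^L,\kappa^L]$ is zero, and therefore $[\kappa^L,\kappa^L]=0$ as a cochain. I expect the main (and only) subtlety to be the reduction to a constant $\gamma$: one must confirm that a polynomial-degree-$1$ coboundary arises from a genuinely constant cochain, which follows from $d^*$ being homogeneous of degree $+1$ for the polynomial grading, so that matching polynomial degrees on both sides isolates the degree-$0$ part of $\gamma$. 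Everything else is the bookkeeping of comparing coefficients of group elements, guided by the fact that the two defining properties—support on $N$ and being a coboundary—are ``complementary'' on $G$, one killing the off-$N$ components and the other killing the $N$-components.
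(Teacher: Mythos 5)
Your proof is correct and takes essentially the same route as the paper's: show that $[\kappa^L,\kappa^L]$ is supported on the kernel (using that the kernel is a subgroup), show that every coboundary is supported off the kernel because each $g$-component of $d_3^*\gamma$ carries the factor ${}^g v - v$, and compare components. The only difference is your reduction to a constant $\gamma$, which is valid but unnecessary --- the displayed formula for $d_3^*$ holds verbatim for an arbitrary 2-cochain (by commutativity of $S$), so the paper applies the support-off-the-kernel observation directly to the given cochain $\alpha$ without any degree argument.
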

\begin{proof}
Suppose $[\kappa^L,\kappa^L]=d^* \alpha$
for some $\alpha$.  Then (by definition of the map $d^*$),
$$
   d_3^*\alpha(v_1\wedge v_2\wedge v_3)
   = - \sum_{g\in G}\sum_{\sigma\in\Alt_3} \alpha_g(v_{\sigma(2)},v_{\sigma(3)})
   ( {}^g v_{\sigma(1)} - v_{\sigma(1)}) g 
$$ 
for all $v_1,v_2, v_3$ in $V$, and thus $d^*\alpha$ is supported off the
kernel $K$ of the representation $G\rightarrow\text{GL}(V)$.
But by Definition~\ref{bracketdef}, $[\kappa^L,\kappa^L]$ is supported 
on $K$, since $\kappa^L$ itself is supported on $K$.
Hence $[\kappa^L,\kappa^L]$ must be the zero cochain.
\end{proof}

The last corollary implies that every
linear noncommutative Poisson structure supported on
group elements acting trivially
lifts (or integrates) to a deformation of $S\# G$: 
\begin{cor}
Suppose a linear cocycle $\kappa^L$ in $(H^2)^G$ has trivial
Gerstenhaber square bracket in cohomology.
If $\kappa^L$ 
is supported on the kernel of the representation $G\rightarrow\text{GL}(V)$,
then the quotient algebra 
$\cH_{\kappa}$ with $\kappa=\kappa^L$
is a Drinfeld orbifold algebra.
Moreover, if $G$ acts faithfully on $V$, then $\cH_{\kappa}
\cong \mathcal{U}(\g)\#G$, a Lie orbifold algebra.
\end{cor}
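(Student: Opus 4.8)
The plan is to apply Theorem~\ref{4Hochschildconditions} directly with $\kappa=\kappa^L$, so that $\kappa^C\equiv 0$, and then in the faithful case to invoke the corollary above characterizing Lie orbifold algebras. First I would verify the four conditions of Theorem~\ref{4Hochschildconditions}. Conditions~(i) and~(ii) are immediate from the hypothesis $\kappa^L\in(H^2)^G$: the parameter is $G$-invariant and its linear part is a cocycle in $H^{\DOT}$, and with $\kappa^C\equiv 0$ the full parameter $\kappa=\kappa^L$ is $G$-invariant. Condition~(iv) reads $[\kappa^C,\kappa^L]=0$, which holds trivially since $\kappa^C\equiv 0$.

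The crux is Condition~(iii). With $\kappa^C\equiv 0$ it demands $[\kappa^L,\kappa^L]=2d^*(\kappa^C)=0$ \emph{as a cochain}, which is strictly stronger than the given hypothesis that the Gerstenhaber square bracket $[\kappa^L,\kappa^L]$ vanishes in cohomology (i.e.\ is a coboundary). This gap is exactly where the support hypothesis enters: since $\kappa^L$ is supported on the kernel of $G\to\text{GL}(V)$ and $[\kappa^L,\kappa^L]$ is a coboundary, Corollary~\ref{offkernel} upgrades the cohomological vanishing to cochain-level vanishing, so that $[\kappa^L,\kappa^L]=0$ as a cochain. With all four conditions confirmed, Theorem~\ref{4Hochschildconditions} yields that $\cH_{\kappa}$ is a Drinfeld orbifold algebra.

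For the second assertion I would specialize to the faithful case, where the kernel of the representation is trivial, so $\kappa^L$ is supported on $1_G$ alone. For such a parameter the cochain identity $[\kappa^L,\kappa^L]=0$ proved above is precisely the Jacobi identity for the skew map $\kappa^L\colon V\wedge V\to V$, so that $[\cdot,\cdot]_{\g}:=\kappa^L$ makes $\g:=V$ a Lie algebra. Taking $\kappa^C\equiv 0$, the three hypotheses of the corollary characterizing Lie orbifold algebras hold (a Lie bracket; the $G$-invariant cocycles $\kappa^L,\kappa^C$; and vanishing $[\kappa^C,\kappa^L]$), so $\cH_{\kappa}$ is a Lie orbifold algebra; and since $\kappa^C\equiv 0$ its defining relations reduce to $vw-wv=[v,w]_{\g}$ together with the unchanged group relations, giving $\cH_{\kappa}\cong\mathcal{U}(\g)\#G$ as in the remark preceding Example~\ref{example:sl2}. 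I expect the only genuine subtlety to be the cochain-versus-cohomology distinction in Condition~(iii), with Corollary~\ref{offkernel} furnishing exactly the bridge that lets a linear noncommutative Poisson structure supported on the kernel of the representation integrate to an honest deformation of $S\#G$.
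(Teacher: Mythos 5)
Your proposal is correct and follows essentially the same route as the paper's own proof: set $\kappa^C\equiv 0$, verify the conditions of Theorem~\ref{4Hochschildconditions} (with (i), (ii), (iv) immediate), and use Corollary~\ref{offkernel} as the bridge from cohomological vanishing of $[\kappa^L,\kappa^L]$ to cochain-level vanishing, which in the faithful case is exactly the Jacobi identity making $\cH_{\kappa}\cong\mathcal{U}(\g)\#G$. Your write-up merely spells out details the paper leaves implicit.
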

\begin{proof}
Since $\kappa^L$ lies in $(H^2)^G$, we may set
$\kappa^C\equiv 0$ and $\kappa:=\kappa^L$ to satisfy the 
conditions of Theorem~\ref{4Hochschildconditions}
(using Corollary~\ref{offkernel}
to deduce that $\kappa^L$ is a Lie bracket).
If $G$ acts faithfully, the resulting Drinfeld orbifold algebra is just the skew group algebra
$\mathcal{U}(\g)\# G$, where the Lie algebra
$\g$ is the vector space $V$ with Lie bracket $\kappa^L$.
\end{proof}

\begin{remark}{\em 
The analysis of the Gerstenhaber bracket in~\cite{bracket}
includes information on the case of cocycles supported {\em off}
the kernel $K$ of the representation $G\rightarrow\text{GL}(V)$.
Indeed, we see in~\cite{bracket}
that if $\kappa^L$ in $(H^2)^G$ is supported off $K$,
then $[\kappa^L,\kappa^L]$ is always a coboundary.  
This guarantees existence of a constant cochain $\kappa^C$ with 
$[\kappa^L,\kappa^L]=2d^*\kappa^C$.
Thus to satisfy the conditions of Theorem~\ref{4Hochschildconditions},
one need only check that
$[\kappa^C,\kappa^L]=0$ as a cochain (on the Koszul resolution).

On the other hand, if $\kappa^L$ in $H^{\DOT}$ is supported {\em on}
the kernel $K$, and $[\kappa^L,\kappa^L]$ is a coboundary, then by
Corollary~\ref{offkernel}, $[\kappa^L,\kappa^L] = 0$ as a cochain.
Thus to satisfy the conditions of Theorem~\ref{4Hochschildconditions}, one
need only solve the equation $[\kappa^C,\kappa^L]=0$ as a cochain
for $\kappa^C$ a {\em cocycle}. 
}
\end{remark}


\section{Applications to Abelian Groups}\label{sec:abel}
The last section expressed the PBW condition in terms of simple conditions on 
Hochschild cocycles.  We see in this section how this alternative
formulation gives a quick and clear proof that 
every linear noncommutative Poisson structure 
(i.e., Hochschild 2-cocycle
with trivial Gerstenhaber square bracket)
lifts to a deformation
when $G$ is abelian. 
Halbout, Oudom, and Tang  
\cite[Theorem 3.7]{HOT} gave an analogous result
 over the real numbers for arbitrary groups (acting faithfully), but their proof
does not directly extend to other fields such as the complex numbers.
(For example, complex reflections in a finite group acting linearly on $\CC^n$
may contribute to Hochschild cohomology $\HH^2(S\#G)$ defined
over the real numbers, but not to the same cohomology
defined over the complex numbers.)

In the case of nonabelian groups,
the square
bracket of the linear part of the parameter $\kappa$
may be zero in cohomology but nonzero as a cochain.
The following proposition explains that
this complication disappears for abelian groups:

\begin{prop}\label{prop:abelian2}
Let $G$ be an abelian group.
Let $\alpha,\beta$ in $(H^2)^G$ be linear 
with Gerstenhaber
bracket $[\alpha,\beta]$ a coboundary (defining the zero cohomology class).
Then $[\alpha,\beta]=0$ as a cochain.
\end{prop}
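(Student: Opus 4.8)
The plan is to compare, for each group element $g$, the subspace of $V$ in which the $g$-component of the cochain $[\alpha,\beta]$ takes its values against the subspace forced on it by being a coboundary, and to show that these two subspaces meet only in $0$. Since $G$ is abelian and $k$ contains all eigenvalues of the elements of $G$ acting on $V$, I would first fix a decomposition of $V$ into common eigenspaces $V=\bigoplus_{\chi}V_\chi$, on which each $h\in G$ acts by a scalar $\chi(h)$; then $V^g=\bigoplus_{\chi(g)=1}V_\chi$ and, as $G$ acts by isometries, $(V^g)^\perp=\bigoplus_{\chi(g)\neq 1}V_\chi$ is exactly the image of the map $v\mapsto {}^gv-v$. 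Recall from the discussion after~(\ref{H-DOT}) that a linear cochain in $(H^2)^G$ is supported only on elements $g$ with $\codim V^g\in\{0,2\}$: on the kernel $K$ of the representation its $g$-component is an equivariant map $\Wedge^2V\to V$, while on a codimension-two element $g$ it is supported on $\Wedge^2(V^g)^\perp$ and valued in $V^g$.

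For the coboundary side, note that $[\alpha,\beta]$ is homogeneous of polynomial degree $1$ and that $d^*$ raises polynomial degree by one (see the formula for $d_3^*$ in the proof of Lemma~\ref{prop:3condns}); hence if $[\alpha,\beta]$ is a coboundary it equals $d^*\gamma$ for a \emph{constant} $2$-cochain $\gamma$. That same formula shows the $g$-component $(d^*\gamma)_g$ to be a sum of terms $\gamma_g(v_{\sigma(2)},v_{\sigma(3)})\,({}^gv_{\sigma(1)}-v_{\sigma(1)})$, so it takes all its values in $\Ima({}^g(\cdot)-\id)=(V^g)^\perp$.

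The heart of the argument is to show that, for every $g$, the bracket component $[\alpha,\beta]_g$ takes its values in $V^g$. Using the explicit formula of Definition~\ref{bracketdef} (equivalently Corollary~\ref{niceform}) I would evaluate $[\alpha,\beta]_g$ on a triple of weight vectors $v_i\in V_{\chi_i}$ and track weights through each composite $\alpha_{gh^{-1}}(\beta_h(v_{\sigma(1)}\wedge v_{\sigma(2)})\wedge v_{\sigma(3)})$ and its $\alpha\leftrightarrow\beta$ partner, using that each $\alpha_a,\beta_h$ is $G$-equivariant by invariance and abelianness. When both factors are supported on $K$ one has $g\in K$ and the assertion is vacuous; so for $g\notin K$ at least one factor is codimension-two. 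Whenever a codimension-two factor $\beta_h$ appears, its nonvanishing forces its two input weight vectors to span the two-dimensional space $(V^h)^\perp$, and then $v_{\sigma(3)}$ must lie in $V^h$, since otherwise all three weight vectors would lie in the two-dimensional $(V^h)^\perp$ and the wedge would vanish. A short weight computation in each remaining case then shows the output weight is trivial on $g$, i.e.\ the value lies in $V^g$.

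Granting this, the proof concludes at once: for each $g$ the equality $[\alpha,\beta]_g=(d^*\gamma)_g$ places a single map simultaneously in $V^g$ and in $(V^g)^\perp$, hence in $V^g\cap(V^g)^\perp=0$, so $[\alpha,\beta]_g=0$; summing over $g$ gives $[\alpha,\beta]=0$ as a cochain. I expect the main obstacle to be the weight bookkeeping in the third paragraph, and in particular the observation that a codimension-two factor pushes the third vector into the fixed space so that no $(V^g)^\perp$-valued contribution survives. This is exactly where abelianness is indispensable: it supplies a common eigenbasis and the two-dimensionality of $(V^g)^\perp$ that drive the wedge-vanishing step, and it is the feature that fails for general $G$, where $[\alpha,\beta]$ can be a nonzero coboundary.
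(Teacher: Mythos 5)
Your proposal is correct, and it runs on the same fuel as the paper's proof: a common eigenbasis (available because $G$ is abelian and $k$ contains the eigenvalues), the structure of representatives in $(H^2)^G$ from (\ref{H-DOT}) (support only on elements with $\codim V^g\in\{0,2\}$, codimension-two components supported on $\Wedge^2(V^g)^\perp$ and valued in $V^g$), equivariance of each component $\alpha_g,\beta_h$ coming from invariance plus commutativity, and the fact that the relevant coboundaries are supported off the kernel and valued in $(V^g)^\perp$. The architecture, though, is genuinely different. The paper supposes a single summand $\beta_h\bigl(v_3\wedge\alpha_g(v_1\wedge v_2)\bigr)$ is nonzero and derives a contradiction case by case: when both $g$ and $h$ act nontrivially the contradiction uses invariance and abelianness alone (no coboundary hypothesis is needed there; that stronger vanishing is recorded separately as Proposition~\ref{lemma:abelian1}), and the coboundary hypothesis enters only in the mixed cases. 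You instead prove one uniform claim --- every component $[\alpha,\beta]_g$ is $V^g$-valued, by weight bookkeeping through both factors --- and then invoke the coboundary hypothesis exactly once, via $V^g\cap(V^g)^\perp=0$. Your weight computation does go through in all three cases (inner factor codimension two with outer on the kernel, the reverse, and both codimension two): the decisive points are that a codimension-two inner factor forces its two inputs to exhaust the two-dimensional $(V^h)^\perp$ and hence pushes the third eigenvector into $V^h$, while a codimension-two outer factor has values in its own fixed space by (\ref{H-DOT}). Your organization buys two things the paper's argument leaves implicit: the reduction by polynomial degree to a \emph{constant} preimage $\gamma$, and the aggregation of all summands over factorizations of $g$ into the full component $[\alpha,\beta]_g$ before comparison with $(d^*\gamma)_g$ (the paper's mixed cases compare an individual summand directly against the coefficient of $gh$ in the coboundary, which strictly speaking is the aggregated object). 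What the paper's route buys in exchange is the sharper local conclusion that the both-codimension-two summands vanish identically, independently of any coboundary hypothesis.
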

\begin{proof}
Let $v_1,\ldots,v_n$ be a basis of $V$ on which $G$ acts diagonally.
If $[\alpha,\beta]$ is nonzero at the chain level, 
then some summand of Definition~\ref{bracketdef} is nonzero for
some triple $v_1,v_2,v_3$.
Suppose without loss of generality that 
$$
w=\beta_h\,
\left(v_3\wedge \alpha_g(v_1\wedge v_2)\right)
\ 
$$
is nonzero for some $g,h$ in $G$.

Note that if $g$ acts nontrivially on $V$, then $v_1$ and $v_2$
must span $(V^g)^\perp$ and $\alpha_g(v_1\wedge v_2)$ lies in $V^g$
as $\alpha_g(v_1\wedge v_2)$ is nonzero and $\alpha_g\in H^2_g$. 
Similarily, if $h$ acts nontrivially on $V$,
then $v_3$ and $\alpha_g(v_1\wedge v_2)$ must span $(V^h)^\perp$
and $w$ lies in $V^h$.
(See the comments after~(\ref{H-DOT}) 
or~\cite[Lemma~3.6]{SheplerWitherspoon1}.) 

Suppose first that {\em both} $g$ and $h$ act nontrivially on $V$.
Then $v_3$ and $\alpha_g(v_1\wedge v_2)$ are independent
vectors in $V^g\cap (V^h)^\perp$, a subspace of the 2-dimensional
space $(V^h)^\perp$.  Thus $(V^h)^\perp \subset V^g$
and $v_1, v_2$ in $(V^g)^\perp$ are fixed by $h$.
As $G$ is abelian and $\alpha$ is $G$-invariant, 
$\alpha_g= \ {^h}\alpha_g$ and
$$
\alpha_g(v_1\wedge v_2)
=(^{h^{-1}}\alpha_g)(v_1\wedge v_2)
=\ ^{h^{-1}}\Big(\alpha_g\big(\, ^hv_1\wedge\, ^hv_2\big)\Big)
=\ ^{h^{-1}}\Big(\alpha_g(v_1\wedge v_2)\Big)\ .
$$
But then $\alpha_g(v_1\wedge v_2)$
is fixed by $h$, contradicting the fact that it lies
in $(V^h)^\perp$.

We use the fact that the cochain map $[\alpha,\beta]$
represents the zero cohomology class
to analyze the case when either $g$ or $h$
acts trivially on $V$.
Calculations show that the image of the differential $d^*$
is supported on elements of $G$ that do not fix $V$ pointwise
(see, for example, Section~\ref{sec:PBW}).
Hence $V^{hg}\neq V$
and 
either $g$ or $h$ acts nontrivially on $V$.
Also note that the coefficient of $gh$
in any image of the differential lies in $(V^{gh})^\perp$.

If $h$ acts nontrivially on $V$ but $g$ fixes $V$ pointwise,
then $w$ lies in $(V^{gh})^\perp=(V^h)^\perp$, contradicting
the fact that $w$ lies in $V^h$ (as $h$ acts nontrivially).  
If instead $g$ acts nontrivially on $V$ but $h$ fixes $V$ pointwise,
we contradict the $G$-invariance of $\beta$: In this case,
$$
\begin{aligned}
w&=\beta_h\big(v_3\wedge\alpha_g(v_1\wedge v_2)\big)
= (^{g^{-1}}\beta_h)\big(v_3\wedge\alpha_g(v_1\wedge v_2)\big)\\
&=\ ^{g^{-1}}\Big(\beta_h\big(\,^gv_3\wedge\,^g(\alpha_g(v_1\wedge v_2))\big)\Big)
=\ ^{g^{-1}}\Big(\beta_h\big(v_3\wedge(\alpha_g(v_1\wedge v_2))\big)\Big)
=\ ^{g^{-1}} w
\end{aligned}
$$
(since both $v_3$ and $\alpha_g(v_1\wedge v_2)$ lie in $V^g$),
so $w$ lies in $V^g=V^{gh}$ instead of $(V^{gh})^\perp$.
\end{proof}


As a consequence of Lemma~\ref{prop:3condns} and Proposition~\ref{prop:abelian2},
we obtain the following: 

\begin{cor}
Let $G$ be an abelian group.
Suppose $\kappa^L$ in $(H^2)^G$ is a linear cocycle
with $[\kappa^L,\kappa^L]$ a coboundary. 
Then $[\kappa^L,\kappa^L]=0$ as a cochain.
Thus we
obtain a Drinfeld orbifold algebra $\cH_{\kappa}$
after setting $\kappa^C\equiv 0$ and $\kappa :=\kappa^L$.
\end{cor}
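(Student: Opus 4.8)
The plan is to derive this corollary by combining the chain-level vanishing supplied by Proposition~\ref{prop:abelian2} with the translation of the PBW conditions into homological language given by Lemma~\ref{prop:3condns}. The statement has two parts---the vanishing claim $[\kappa^L,\kappa^L]=0$ at the cochain level, and the conclusion that $\cH_\kappa$ is a Drinfeld orbifold algebra---and I would establish them in that order, since the PBW verification relies on the first.

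For the vanishing claim, I would apply Proposition~\ref{prop:abelian2} in the special case $\alpha=\beta=\kappa^L$. Both hypotheses of that proposition are exactly what is assumed here: $\kappa^L$ is a linear element of $(H^2)^G$, and $[\kappa^L,\kappa^L]$ is a coboundary, hence represents the zero class in $\HH^3(S\#G)$. The proposition then forces $[\kappa^L,\kappa^L]=0$ as a cochain on the Koszul resolution. All of the genuine difficulty resides inside Proposition~\ref{prop:abelian2}---the eigenbasis and support analysis for an abelian group---so for the corollary itself this step is immediate.

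For the PBW conclusion, I would set $\kappa^C\equiv 0$ and $\kappa:=\kappa^L$ and verify Conditions~(i)--(iv) of Theorem~\ref{thm:4condns} by way of Lemma~\ref{prop:3condns}. Condition~(i), $G$-invariance of $\kappa$, holds because $\kappa^L\in(H^2)^G$ is invariant and $\kappa^C\equiv 0$ is trivially so. By Lemma~\ref{prop:3condns}, Condition~(ii) is precisely the hypothesis that $\kappa^L$ is a cocycle; Condition~(iii) reads $[\kappa^L,\kappa^L]=2d^*\kappa^C$, which holds because both sides vanish---the left by the first part of the proof and the right because $\kappa^C\equiv 0$; and Condition~(iv) reads $[\kappa^C,\kappa^L]=0$, immediate from $\kappa^C\equiv 0$ together with Definition~\ref{bracketdef}. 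Theorem~\ref{thm:4condns} then yields that $\cH_\kappa$ satisfies the PBW condition, i.e.\ is a Drinfeld orbifold algebra.

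The one point deserving care is that Conditions~(iii) and~(iv), as reformulated in Lemma~\ref{prop:3condns}, are assertions at the level of cochains rather than cohomology classes. Consequently I must feed in the chain-level identity $[\kappa^L,\kappa^L]=0$ rather than merely the vanishing of its cohomology class; this strengthening is exactly what Proposition~\ref{prop:abelian2} provides for abelian $G$, and it is the only nonformal ingredient in the argument. Equivalently, one could route the verification through the cohomological criterion of Theorem~\ref{4Hochschildconditions}, whose Conditions~(i)--(iv) coincide with those checked above.
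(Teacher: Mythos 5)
Your proposal is correct and follows exactly the route the paper intends: the paper states this corollary "as a consequence of Lemma~\ref{prop:3condns} and Proposition~\ref{prop:abelian2}," which is precisely your two-step argument (apply Proposition~\ref{prop:abelian2} with $\alpha=\beta=\kappa^L$ for the chain-level vanishing, then verify the PBW conditions through Lemma~\ref{prop:3condns} with $\kappa^C\equiv 0$). Your closing observation---that the chain-level identity, not merely the cohomological one, is what the PBW conditions require---is exactly the point of the proposition, so nothing is missing.
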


Other Drinfeld orbifold algebras with the same parameter
$\kappa^L$ arise from solving
the equation $[\kappa^C,\kappa^L] = 0$ for $\kappa^C$ a cocycle of
polynomial degree 0 in $(H^2)^G$. 
Compare with~\cite[Theorem~3.4]{HOT}, which is stated in the case that the
action is faithful.

We end this section by pointing out a much stronger statement than that implied
by~\cite[Theorem~9.2]{bracket} for abelian groups:
There we proved that for all groups $G$,
the bracket of any two Hochschild 2-cocycles supported off the kernel
of the representation is a coboundary (i.e., zero in cohomology).
The proposition below (cf.~\cite[Lemma~3.3]{HOT})
explains that when $G$ is abelian, such brackets are
not only coboundaries, they are zero as cochains.

\begin{prop}\label{lemma:abelian1}
Let $G$ be an abelian group. 
Let $\alpha,\beta$ in $(H^2)^G$ be two linear
Hochschild 2-cocycles on $S\# G$ supported off of the 
kernel of the representation
$G\rightarrow\text{GL}(V)$.
Then $[\alpha,\beta]=0$ as a cochain.
\end{prop}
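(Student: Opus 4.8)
The plan is to show that each individual summand in the expansion of $[\alpha,\beta]$ given by Definition~\ref{bracketdef} already vanishes, by reducing to the ``both group elements act nontrivially'' situation that was analyzed inside the proof of Proposition~\ref{prop:abelian2}. Since $[\alpha,\beta]$ is an alternating trilinear cochain, it suffices to evaluate it on triples $v_1,v_2,v_3$ taken from a fixed basis of $V$ on which the abelian group $G$ acts diagonally (such a basis exists because $G$ is abelian, acts by isometries, and $k$ contains the relevant eigenvalues). A typical summand then has the form
$$\alpha_{gh^{-1}}\!\left(\beta_h(v_{\sigma(1)}\wedge v_{\sigma(2)})\wedge v_{\sigma(3)}\right)g
\quad\text{or}\quad
\beta_{gh^{-1}}\!\left(\alpha_h(v_{\sigma(1)}\wedge v_{\sigma(2)})\wedge v_{\sigma(3)}\right)g$$
for $\sigma\in\Alt_3$, and since the two types are interchanged by swapping the roles of $\alpha$ and $\beta$, I would treat only the first.

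First I would record the structural consequences of the hypotheses. Because $\alpha$ and $\beta$ lie in $(H^2)^G$, are linear, and are supported off the kernel of $G\to\text{GL}(V)$, any group element $k$ with $\alpha_k\neq0$ (or $\beta_k\neq0$) must act nontrivially on $V$, hence satisfy $\codim V^k=2$; moreover $\alpha_k$ takes values in $V^k$ and is supported on $\Wedge^2\big((V^k)^\perp\big)$ (see~(\ref{H-DOT}) and the comments following it). Consequently, for the displayed summand to be nonzero, \emph{both} the inner index $h$ and the outer index $gh^{-1}=:k$ act nontrivially on $V$. This is precisely where ``supported off the kernel'' does its work: unlike in Proposition~\ref{prop:abelian2}, no case arises in which one of the two group elements fixes $V$ pointwise, so the coboundary hypothesis needed there is never invoked.

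With both indices acting nontrivially, I would run the diagonalization argument from Proposition~\ref{prop:abelian2}. Nonvanishing of $\beta_h(v_{\sigma(1)}\wedge v_{\sigma(2)})$ forces $v_{\sigma(1)},v_{\sigma(2)}$ to be the two basis vectors spanning $(V^h)^\perp$, so the remaining basis vector $v_{\sigma(3)}$ lies in $V^h$, while $\beta_h(v_{\sigma(1)}\wedge v_{\sigma(2)})\in V^h$ as well. Nonvanishing of $\alpha_k(\,\cdot\,\wedge v_{\sigma(3)})$ then forces $v_{\sigma(3)}$ and $\beta_h(v_{\sigma(1)}\wedge v_{\sigma(2)})$ to be independent vectors in $(V^k)^\perp$; as these two vectors also lie in $V^h$ and $(V^k)^\perp$ is two-dimensional, this yields $(V^k)^\perp\subset V^h$, hence $(V^h)^\perp\subset V^k$, so that $v_{\sigma(1)},v_{\sigma(2)}\in(V^h)^\perp$ are fixed by $k$. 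Finally, using that $G$ is abelian (so ${}^{k^{-1}}\!\beta_h=\beta_h$ by $G$-invariance of $\beta$, since $k^{-1}hk=h$) together with ${}^{k}v_{\sigma(1)}=v_{\sigma(1)}$ and ${}^{k}v_{\sigma(2)}=v_{\sigma(2)}$, I would compute
$$\beta_h(v_{\sigma(1)}\wedge v_{\sigma(2)})
={}^{k^{-1}}\!\big(\beta_h({}^{k}v_{\sigma(1)}\wedge {}^{k}v_{\sigma(2)})\big)
={}^{k^{-1}}\!\big(\beta_h(v_{\sigma(1)}\wedge v_{\sigma(2)})\big),$$
showing $\beta_h(v_{\sigma(1)}\wedge v_{\sigma(2)})$ is fixed by $k$, contradicting its lying in $(V^k)^\perp$. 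Hence every summand is zero and $[\alpha,\beta]=0$ as a cochain.

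Since the argument is a specialization of Proposition~\ref{prop:abelian2} to its easiest case, I do not expect a serious obstacle; the only real content is the bookkeeping of the nested subspace conditions ($V^h$, $(V^h)^\perp$, $V^k$, $(V^k)^\perp$) and the matching of the two summand types under $\alpha\leftrightarrow\beta$. The mild point worth double-checking is that a \emph{single} eigenbasis simultaneously diagonalizes the action of every group element (valid because $G$ is abelian), which is what licenses the claim that any basis vector not among the spanning pair of $(V^h)^\perp$ automatically lies in $V^h$.
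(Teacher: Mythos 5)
Your proposal is correct and follows essentially the same route as the paper's own ``short, direct proof'': fix a basis on which the abelian group acts diagonally, note that the off-kernel support hypothesis forces both group indices of any potentially nonzero summand of Definition~\ref{bracketdef} to act nontrivially, and then run exactly the perpendicular-space containment and $G$-invariance argument from the third paragraph of the proof of Proposition~\ref{prop:abelian2} to reach a contradiction. The only differences are cosmetic: you treat the summand with $\alpha$ outermost rather than $\beta$ (handled by the symmetry you note), and you spell out the cited argument from Proposition~\ref{prop:abelian2} instead of referring to it.
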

\begin{proof}
This statement follows immediately from~\cite[Theorem~9.2]{bracket}
and Proposition~\ref{prop:abelian2}. However,
we give a short, direct proof here:
Let  $v_1,\ldots,v_n$ be a basis of $V$ on which $G$ acts diagonally.
If $[\alpha,\beta]$ is nonzero, 
then some summand 
$$
\beta_h\,
\left(v_3\wedge \alpha_g(v_1\wedge v_2)\right)
$$
of Definition~\ref{bracketdef} is nonzero for some
triple $v_1,v_2,v_3$ in $V$ and some $g$ and $ h$ in $G$.
Since $g$ and $h$ both act nontrivially
on $V$, the vector
$\alpha_g(v_1\wedge v_2)$ must be invariant under $h$
(as we saw in the third paragraph 
of the proof of Proposition~\ref{prop:abelian2}).
But this contradicts the fact that
$v_3$ and $\alpha_g(v_1\wedge v_2)$
must span $(V^h)^{\perp}$. 
\end{proof}

One may apply Proposition~\ref{lemma:abelian1} to find many examples
of Drinfeld orbifold algebras of the type given in Example~\ref{example:klein4group}.


\end{document}